\newcommand{\p}[1]{\bigskip\noindent\emph{#1.}}
\newcommand{\nt}{\newtheorem}
\newcommand{\C}{\mathbb{C}}
\newcommand{\Z}{\mathbb{Z}}
\newcommand{\squig}{{\scriptstyle\sim\mkern-3.9mu}}
\newcommand{\rsquigend}{{\scriptstyle\rule{.1ex}{0ex}\rhd}}
\newcounter{sqindex}
\newcommand\squigs[1]{%
  \setcounter{sqindex}{0}%
  \whiledo {\value{sqindex}< #1}{\addtocounter{sqindex}{1}\squig}%
}
\newcommand\rsquigarrow[2]{%
  \mathbin{\stackon[2pt]{\squigs{#2}\rsquigend}{\scriptscriptstyle\text{#1\,}}}%
}
\DeclareMathOperator{\LMod}{LMod}
\DeclareMathOperator{\PMod}{PMod}
\title{A solution to the degree-$d$ Twisted Rabbit Problem}
\date {}
\author[Malavika Mukundan]{Malavika Mukundan}
\author{Rebecca R. Winarski}
\address{Malavika Mukundan \\  Department of Mathematics\\ 530 Church Street\\
University of Michigan
Ann Arbor, MI, 48109}
\email{malavim@umich.edu}
\address{Rebecca R. Winarski \\  Department of Mathematics and Computer Science\\ College of the Holy Cross\\
1 College Street
Worcester, MA 01610}
\email{rwinarsk@holycross.edu}
\begin{document}
\begin{abstract} We solve generalizations of Hubbard's twisted rabbit problem for analogues of the rabbit polynomial of degree $d\geq 2$.  The twisted rabbit problem asks: when a certain quadratic polynomial, called the Douady Rabbit polynomial, is twisted by a cyclic subgroup of a mapping class group, to which polynomial is the resulting map equivalent (as a function of the power of the generator)?  The solution to the original quadratic twisted rabbit problem, given by Bartholdi--Nekrashevych \cite{BN}, depended on the 4-adic expansion of the power of the mapping class by which we twist.  In this paper, we provide a solution that depends on the $d^2$-adic expansion of the power of the mapping class element by which we twist.
\end{abstract}
\maketitle
\section{Introduction}
Let $f:\C\rightarrow \C$ be an orientation-preserving branched cover.  Let $C_f\subset \C$ be the set of points for which $f$ is locally non-injective (the set of critical points).  The post-critical set $P_f=\{f^n(c)\mid c\in C_f, n\geq 1\}$ is the forward orbit of $C_f$.  If $P_f$ is finite, $f$ is said to be {\it post-critically finite}.  Let $g:\C\rightarrow\C$ be another post-critically finite branched cover with post-critical sets $P_g$.  We say that $f,g$ are {\it equivalent} or {\it combinatorially equivalent} (or {\it Thurston equivalent}) if there exist orientation preserving homeomorphisms $h_0,h_1:(\C, P_f)\rightarrow(\C,P_g)$ such that $h_0f=gh_1$ and $h_0$ and $h_1$ are homotopic relative to $P_f$. Thurston proved that a post-critically finite branched cover $\C\rightarrow \C$ is either equivalent to a polynomial or has a certain kind of topological obstruction \cite{DH}.  Over the past decades, much work has been directed towards determining a holomorphic map to which an unobstructed branched cover $\C\to\C$ (or $S^2\to S^2$) is equivalent  \cite{BD,BN,BLMW,BFH,BBY,spider,KL,nekra09,nek_cactus,RSY,ST,thurston_positive}.

In the 1980s, Hubbard posed the \textit{twisted rabbit problem}, which presented the challenge of classifying certain branched covers by the polynomial to which they were equivalent.  The rabbit polynomial is the quadratic polynomial $R(z)=z^2+c$ where $c\approx-0.122561+0.744862i$ for which the critical point 0 is 3-periodic.  The post-critical set consists of three points: $\{0,R(0),R^2(0)\}$.  Let $x$ be a curve surrounding $R(0)$ and $R^2(0)$, and let $D_x$ be the Dehn twist about $x$ (see Figure~\ref{fig:rabbit_with_loops}).  The composition $D_x^mR$ is a branched cover $\C\rightarrow \C$, and by the Bernstein--Levy theorem \cite{LevyThesis} (also \cite[Chapter~10]{hubbard}), it is equivalent to a polynomial.  Hubbard's twisted rabbit problem is: for $m \in \Z$, find a function in terms of $m$ that determines to which polynomial $D_x^mR$ is equivalent.  After remaining open for nearly 25 years, Bartholdi--Nekrachevych solved the twisted rabbit problem in 2006 \cite{BN}.

Belk--Lanier--Margalit and the second author solved a generalization of the twisted rabbit problem in which they compose quadratic polynomials where the critical point is $n$-periodic with powers of an analogous Dehn twist for $n\geq 3$ \cite{BLMW}.  Recently Lanier and the second author extended their work to unicritical cubic polynomials (with any number of post-critical points).

\begin{figure}
    \centering
    \includegraphics[scale=0.25]{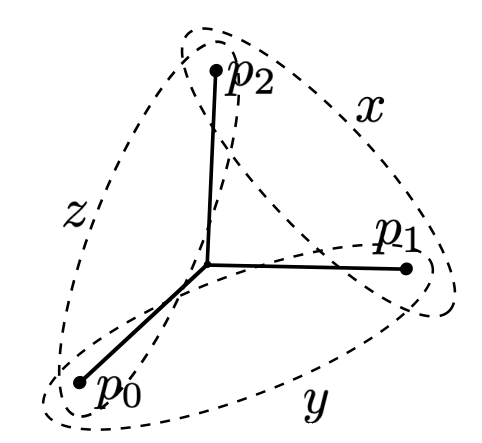}
    \caption{The Hubbard tree of the rabbit polynomial along with the simple closed curves $x$, $y$ and $z$}
    \label{fig:rabbit_with_loops}
\end{figure}
\p{The degree-$d$ twisted rabbit problem} In this paper, we generalize Hubbard's twisted rabbit problem to higher degree analogues of the rabbit polynomial.  That is: for each $d\geq 2$, there is a unicritical polynomial $R_d(z)=z^d+c_d$ that naturally generalizes the rabbit polynomial.  The critical point (0) is 3-periodic, as in the quadratic case.  Let $x_d$ be the curve that is homotopic to the boundary of a neighborhood of the straight line segment between $R_d(0)$ and $R_d^2(0)$.  In Theorem \ref{thm:degree_d}, we describe a function that determines to which polynomial $D_{x_d}^mR_d$ is equivalent in terms of $m$.

\p{The polynomials} Before we state the solution to the degree-$d$ twisted rabbit problem, we explain the notation we use for the polynomials that appear.  For each degree $d$, there exist $d+1$ equivalence classes of polynomials that have a critical point that is 3-periodic.  
One of these is $R_d$, the generalization of the rabbit polynomial (the degree-$d$ rabbit); see Figure \ref{fig:R} for the Julia set when $d=5$.  The complex conjugate of the rabbit polynomial is called the degree-$d$ corabbit polynomial $\overline{R}_d$, see Figure \ref{fig:CR} for its Julia set when $d=5$.  The remaining $d-1$ polynomials are all generalizations of the airplane polynomial.  These remaining $d-1$ polynomials have a Hubbard tree with two edges that meet at the critical point, and can be indexed by the angle made by these two edges at the critical point with a fixed orientation. For $1 \leq i \leq d-1$, we denote by $A_{d,i}$ the degree-$d$ generalization of the airplane for which this angle is $\frac{2\pi i}{d}$; see Figure \ref{fig:A1}-\ref{fig:A4} for the Julia sets of the four degree-$d$ airplanes when $d=5$.

The solution to the original (quadratic) twisted rabbit problem depends on the 4-adic expansion of the power by which we twist.  Similarly, the solution to the degree-$d$ twisted rabbit problem will depend on the $d^2$-adic expansion of the power by which we twist.

Any integer $m$ has a $d^2$-adic expansion of the form $m_sm_{s-1}m_{s-2}.....m_1$ if $m\geq 0$, or $\overline{d^2-1}m_sm_{s-1}m_{s-2}...m_1 $ if $m<0$, with $m_i \in \{0,1,...,d^2-1\}$ for all $i \in \{1,2,...,s)\}$.  Let $\sigma_{d^2}(m)$ be the least value of $s$ such that all digits of the $d^2$-adic expansion of $m$ to the left of $m_s$ are repeating (that is, for all $t>s$, $m_t=0$ if $m\geq 0$ or $m_t=d^2-1$ if $m<0$).
\begin{theorem}\label{thm:degree_d}
Let $R_d$ be the degree-$d$ rabbit polynomial and $D_x$ the Dehn twist about the curve $x=x_d$.   \\
If $(d+1)| m_i$ for all $i \in \{1,2,...,\sigma_{d^2}(m)\}$, then 
    $$D_x^mR_d \simeq \begin{cases}
    R_d &\text{if }m \geq 0\\
    \overline{R}_d&\text{if }m<0
    \end{cases}.$$
Otherwise, let $i$ be the least index such that $m_i$ is not divisible by $d+1$. We may write $m_i$ uniquely as $d\ell+n$, where $\ell,n \in \{0,1,...,d-1\}$. Since $(d+1)\nmid m_i$, we have that $\ell\neq n$.  Then:

    $$D_x^mR_d \simeq \begin{cases}
    A_{d,n-\ell} &\text{if } n>\ell\\
    A_{d,d-(\ell-n)} &\text{if }n<\ell
    \end{cases}.$$

\end{theorem}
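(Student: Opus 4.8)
The plan is to work entirely on the topological side via the tree-lifting method of \cite{BLMW}. I would represent each of the $d+1$ candidate polynomials by the isotopy class (rel $P$) of its Hubbard tree spanning the three post-critical points, recording at the critical point the angular data that separates the rabbit $R_d$, its mirror the corabbit $\overline{R}_d$, and the airplanes $A_{d,i}$ (distinguished by the angle $\tfrac{2\pi i}{d}$ between the two edges meeting at the critical point). By the recognition-by-lifting criterion, a topological polynomial is Thurston equivalent to the model carrying a given Hubbard tree exactly when the dynamics of lifting trees under $R_d$ stabilizes to that tree; so the problem reduces to computing, for the twisted cover $D_x^mR_d$, the eventual value of the tree-lifting dynamics started at the twisted tree $D_x^m(T_0)$.

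The engine is a single rewriting identity. I would first compute how the pullback under $R_d$ acts on a twisted tree: since $R_d$ has degree $d$ and $x$ encircles the two post-critical points $R_d(0)$ and $R_d^2(0)$, the preimage $R_d^{-1}(x)$ is a controlled union of curves and $D_x$ lifts to a product of Dehn twists about them; combined with the $3$-periodicity of the critical orbit, I expect one full pass of the recursion to re-normalize $R_d$ to standard form while dividing the twisting parameter by $d^2$. Concretely, the goal is an identity of the shape
\[
D_x^m R_d \;\simeq\; \Psi_{m_1}\, D_x^{\lfloor m/d^2\rfloor} R_d,
\]
where the correction $\Psi_{m_1}$ depends only on the last $d^2$-adic digit $m_1$. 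Iterating strips off one $d^2$-adic digit per step and terminates after $\sigma_{d^2}(m)$ steps, once the digits become constant ($0$ for $m\ge 0$, $d^2-1$ for $m<0$); the residual sign is what sends $m\ge 0$ to the rabbit and $m<0$ to the corabbit.

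I would then read off the two regimes from $m_1=d\ell+n$ (so $(\ell,n)$ is its base-$d$ decomposition, hinting that one $d^2$-step is really two $d$-lifts). The divisibility test is clean: because $d\equiv -1\pmod{d+1}$, we have $(d+1)\mid m_1$ iff $n=\ell$, and then $\Psi_{m_1}$ fixes the rabbit-type isotopy class; hence if every digit up to index $\sigma_{d^2}(m)$ is divisible by $d+1$ the lifting orbit never leaves the rabbit family and converges to the rabbit (or, with the sign, the corabbit). When $(d+1)\nmid m_1$, so $n\ne\ell$, I would track how the $\ell$ ``inner'' and $n$ ``outer'' units of twisting redistribute the two edges at the critical point: the net angle should be $\tfrac{2\pi(n-\ell)}{d}$, that is $A_{d,(n-\ell)\bmod d}$, which is exactly $A_{d,n-\ell}$ for $n>\ell$ and $A_{d,d-(\ell-n)}$ for $n<\ell$. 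The final point is to verify that once an airplane tree appears it is a fixed point of the lifting map, so that all later digits are irrelevant and only the first non-divisible digit survives.

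The main obstacle, I expect, is the explicit lift computation and, above all, the bookkeeping of the angle at the critical point through the degree-$d$ cover: one must prove the clean shift $m\mapsto\lfloor m/d^2\rfloor$ rather than a messier affine recursion, show precisely how $\ell$ and $n$ enter and subtract when the lift of the inner curve is pushed through the $3$-cycle, and confirm that the rabbit/corabbit and airplane trees are the only attracting configurations of the lifting dynamics. Correctly matching the combinatorial angle to the specific index $(n-\ell)\bmod d$, rather than to some other member of the airplane family, is the delicate heart of the argument.
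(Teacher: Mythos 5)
Your overall strategy (strip one $d^2$-adic digit per lifting pass, then identify a finite list of base cases by their Hubbard trees and invariant angles) is the strategy of the paper, and your final bookkeeping of the airplane index $(n-\ell)\bmod d$ matches Proposition~\ref{prop:base_case}. But the central step, as you have set it up, has a genuine gap. You propose the identity $D_x^mR_d\simeq \Psi_{m_1}D_x^{\lfloor m/d^2\rfloor}R_d$ with a correction $\Psi_{m_1}$ depending only on the last digit, to be iterated digit by digit, and you defer the justification for discarding the remaining digits to the claim that ``once an airplane tree appears it is a fixed point of the lifting map.'' This identity cannot be iterated in that form: the lifting recursion acts on the \emph{entire} mapping class sitting in front of $R_d$, so after one pass you are reducing $\Psi_{m_1}D_x^{k_1}R_d$, not $D_x^{k_1}R_d$ with $\Psi_{m_1}$ waiting harmlessly outside. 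The correct statement (Lemma~\ref{lem:reduction}) is sharper and of a different shape: when $(d+1)\mid m_1$ the correction is trivial and you recurse on $D_x^{k_1}R_d$; when $(d+1)\nmid m_1$ the reduction \emph{terminates immediately} at $D_y^{\ell-n}R_d$, with no residual factor $D_x^{\lfloor m/d^2\rfloor}$ at all. The reason the higher digits vanish is not a fixed-point property of an airplane tree but the fact that the twist carrying them, $D^k_{D_y^{-\ell}(z)}$, is a twist about a curve whose defining arc has algebraic intersection $\ell\not\equiv 0\pmod d$ with the branch cut, so every component of its preimage under $R_d$ is peripheral and the lifted twist is the identity (Lemma~\ref{lem:branch_cut_intersection}). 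Without identifying this mechanism, your assertion that ``only the first non-divisible digit survives'' is unsupported.

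Two further omissions. First, to perform the reduction at all when the last base-$d$ digit $n$ is nonzero you cannot lift $D_x^{d^2k+d\ell+n}$ directly; the paper peels off one twist at a time using the lantern relation $D_xD_yD_z=\mathrm{id}$ on the four-holed sphere, and some such relation is unavoidable in your computation of ``how the $\ell$ inner and $n$ outer units of twisting redistribute.'' Second, the terminal case for $m<0$ is not just ``a residual sign'': the digit-stripping stabilizes at $D_x^{-1}R_d$ (since the repeating digit is $d^2-1$), and showing $D_x^{-1}R_d\simeq\overline{R}_d$ requires its own invariant-tree computation (Lemma~\ref{lem:corabbit}), on a par with the airplane base cases you do plan to compute.
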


\medskip
In particular, in the set $\{D_x^mR_d\mid |m|\leq N\}$, the airplane polynomials $A_{d,1},\cdots, A_{d,d-1}$ occur with equal frequency.  Moreover, as $N$ tends to infinity, the probability that $D_x^mR_d$ (with $|m|\leq N$) is equivalent to the rabbit or corabbit polynomial approaches zero. 
\begin{corollary}
Fix a degree $d$ greater than 1.  For $S\geq 1$, let $$\Sigma_S=\{m\in\mathbb{Z}\mid\sigma_{d^2}(m)\leq S\}.$$ With the uniform distribution on $\Sigma_S$, the probability that for $m \in \Sigma_S$, $D_x^mR_d$ is equivalent to an airplane, that is, a map in the collection $\{A_{d,i}\mid 1\leq i\leq d-1\}$, is given by $1-\frac{1}{d^S}$. In particular, for $m \in \Sigma_S$,
\begin{itemize}
    \item For any $i \in \{1,2,...,d-1\}$, the probability that $D_x^mR_d\simeq A_{d,i}$ is $\frac{1}{d-1} - \frac{1}{(d-1)d^S}$.
    \item The probability that $D_x^mR_d \simeq R_d$ is $\frac{1}{2d^S}$
    \item The probability that $D_x^mR_d \simeq \overline{R}_d$ is $\frac{1}{2d^S}$
    \end{itemize}
\end{corollary}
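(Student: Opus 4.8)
The plan is to reduce everything to a digit-counting problem via Theorem~\ref{thm:degree_d}. First I would pin down $\Sigma_S$ explicitly. The condition $\sigma_{d^2}(m)\le S$ is equivalent to requiring $m_t=0$ for all $t>S$ when $m\ge 0$ and $m_t=d^2-1$ for all $t>S$ when $m<0$. Hence the nonnegative elements of $\Sigma_S$ are exactly $\{0,1,\dots,d^{2S}-1\}$, and (using the base-$d^2$ complement representation, in which $-1=\dots(d^2-1)(d^2-1)$) the negative elements are exactly $\{-d^{2S},\dots,-1\}$. Thus $|\Sigma_S|=2d^{2S}$, and each element corresponds to a string of $S$ digits $(m_1,\dots,m_S)\in\{0,\dots,d^2-1\}^S$, with the convention that every digit beyond position $S$ equals the padding digit $0$ (resp.\ $d^2-1$) for $m\ge 0$ (resp.\ $m<0$).

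The key arithmetic observation is that $d^2-1=(d-1)(d+1)$, so both padding digits $0$ and $d^2-1$ are divisible by $d+1$. By Theorem~\ref{thm:degree_d}, $D_x^mR_d$ is a rabbit or corabbit precisely when every significant digit is divisible by $d+1$, and by the observation this is the same as requiring all $S$ digits $m_1,\dots,m_S$ to be divisible by $d+1$. Among the $d^2$ possible digit values, exactly $d$ are divisible by $d+1$ (namely $0,(d+1),\dots,(d-1)(d+1)$), so there are $d^S$ such strings on each of the positive and negative sides. This gives $d^S$ values of $m$ equivalent to $R_d$ and $d^S$ equivalent to $\overline R_d$, hence probabilities $d^S/(2d^{2S})=\tfrac{1}{2d^S}$ each, and $1-\tfrac{1}{d^S}$ for landing on some airplane.

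For the individual airplanes I would first determine which airplane a single ``bad'' digit produces. Writing a digit as $m_j=d\ell+n$ with $\ell,n\in\{0,\dots,d-1\}$, the congruence $d\equiv -1\pmod{d+1}$ gives $m_j\equiv n-\ell\pmod{d+1}$, so $(d+1)\nmid m_j$ iff $n\ne \ell$, and Theorem~\ref{thm:degree_d} then yields the airplane index $(n-\ell)\bmod d$. For each fixed $k\in\{1,\dots,d-1\}$ there are exactly $d$ digit values (one per $\ell$) with $(n-\ell)\equiv k\pmod d$, so the $d^2-d$ non-rabbit digit values split uniformly, $d$ per airplane. Counting, on the nonnegative side, the $m$ whose least non-$(d+1)$-divisible digit sits at position $j$ and yields $A_{d,k}$ requires $j-1$ divisible digits ($d$ choices each), one of $d$ digits giving $A_{d,k}$ at position $j$, and arbitrary digits afterward ($d^2$ choices each), for a total of $d^{j-1}\cdot d\cdot d^{2(S-j)}=d^{2S-j}$. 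Summing over $j$ gives $\sum_{j=1}^S d^{2S-j}=d^S\tfrac{d^S-1}{d-1}$, with the identical count on the negative side, so dividing $2d^S\tfrac{d^S-1}{d-1}$ by $2d^{2S}$ produces $\tfrac{1}{d-1}-\tfrac{1}{(d-1)d^S}$.

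The main obstacle, and essentially the only place genuine care is needed, is the bookkeeping for negative $m$: one must verify that the complement representation turns the negative side of $\Sigma_S$ into a clean block $\{-d^{2S},\dots,-1\}$ indexed by arbitrary digit strings, and that the ``least bad digit'' of Theorem~\ref{thm:degree_d} is insensitive to the infinite tail of padding digits. The identity $d^2-1=(d-1)(d+1)$ is precisely what makes the positive and negative cases symmetric, so once it is in hand both sides are counted by the same argument.
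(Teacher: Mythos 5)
Your argument is correct and complete: the identification of $\Sigma_S$ with the $2d^{2S}$ integers $\{-d^{2S},\dots,d^{2S}-1\}$, the observation that both padding digits $0$ and $d^2-1=(d-1)(d+1)$ are divisible by $d+1$, the count of $d$ admissible digit values per residue class, and the geometric-series count $\sum_{j=1}^S d^{2S-j}$ all check out and reproduce the stated probabilities. The paper states this corollary without proof as a direct consequence of Theorem~\ref{thm:degree_d}, and your digit-counting derivation is precisely the intended (omitted) justification, so there is nothing to compare beyond noting that you have supplied the details the authors left implicit.
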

\p{Example} To make Theorem~\ref{thm:degree_d} concrete, we give the polynomials to which $D_x^mR_5$ is equivalent for $0\leq m\leq 24$ in Table \ref{tab:basecasesdeg5}.  We observe that $A_{5,1},\cdots, A_{5,4}$ each appear 4 times and $R_d$ appears 5 times.  The next $m$ for which $D_x^mR_5$ is equivalent to the rabbit is $m=150$, which has 25-adic expansion $(6)(6)$.  We also note that $\overline{R}_5$ does not appear in the table because it only occurs when $m<0$.  For $-25\leq m\leq 0$, the polynomial $\overline{R}_d$ occurs when $m=-1,-7,-13,-19,-25$.
\begin{table}[h]
    \centering
    \begin{tabular}{||c|m{1.5cm}||c|m{1.5cm}||c|m{1.5cm}
    ||c|m{1.5cm}||c|m{1.5cm}||}
    \hline $m$ & $D_x^m R_5$ & $m$ & $D_x^m R_5$ & $m$ & $D_x^m R_5$ & $m$ & $D_x^m R_5$ & $m$ & $D_x^m R_5$ \\ \hline 
        0 &  $R_5$ & 5 & $A_{5,4}$ & 10 & $A_{5,3}$ & 15 &$A_{5,2}$ & 20 & $A_{5,1}$ \\
         1 & $A_{5,1}$ & 6 & $R_5$ & 11 & $A_{5,4}$ & 16 & $A_{5,3}$ & 21 & $A_{5,2}$\\
        2 & $A_{5,2}$ & 7 & $A_{5,1}$ & 12 & $R_5$ & 17 & $A_{5,4}$ & 22 & $A_{5,3}$\\
        3 & $A_{5,3}$ & 8 & $A_{5,2}$ & 13 & $A_{5,1}$ & 18 & $R_5$ & 23 & $A_{5,4}$\\
        4 & $A_{5,4}$ & 9 & $A_{5,3}$ & 14 & $A_{5,2}$ & 19 & $A_{5,1}$ & 24 & $R_5$\\
        \hline 
       \end{tabular}
    \caption{Base cases for $d=5$.}
    \label{tab:basecasesdeg5}
\end{table}

\p{Methods} We follow the strategy of Bartholdi--Nekrashevych \cite{BN}: in Section~\ref{sec:reduction}, we find formulae that reduce $D_x^mR_d$ to $R_d$ post-composed with one of a finite set of maps.  Then in Section~\ref{sec:base_cases}, we determine a polynomial equivalent to each of these ``base cases" using the lifting algorithm of Belk--Lanier--Margalit and the second author in \cite{BLMW}.

\p{Other twisted rabbit problems} As mentioned above, the (quadradic) twisted rabbit problem remained open for over two decades.  When Bartholdi--Nekrashevych solved the problem, it shifted the techniques used to study holomorphic dynamics.  But they gave more than just a solution to the problem that Hubbard originally posed: for instance, they gave an algorithm to determine the polynomial to which $gR_2$ was equivalent for any pure mapping class $g$.  Their work opened up a world of possible generalizations: in this paper, we follow their lead by increasing the {\it degree} of the polynomial by which we twist.  Belk--Lanier--Margalit and the second author generalize the (quadratic) twisted rabbit problem by increasing the size of the post-critical set.  In concurrent work of Lanier and the second author, they solve several of these problems for the cubic rabbit polynomial.  In particular, they give a closed-form solution to $D_x^mR_3$ that agrees with our solution in that case.  
They also generalize this case to give a closed-form solution to determine the equivalence class of $D_x^mR_{3,n}$ where $R_{3,n}$ is a specific cubic polynomial with $n$ post-critical points that is a natural generalization of $R_3$.  
By pairing our paper with theirs, we have a 2-parameter family of generalizations of the twisted rabbit problem: by varying both the degree $d$ and the number of post-critical points.  In this paper we prove that we can obtain any equivalence class of a degree $d$-unicritical polynomial with a 3-perioidic critical point by twisting the degree $d$ rabbit polynomial by a power of $D_x$.  Lanier and the second author show that the same is not true when the critical point has period greater than 3 (at least in the cubic case).  Further investigation into this 2-parameter family of twisted rabbit problems may reveal deeper structure or patterns within the set of unicritical polynomials with periodic critical point.

\p{Acknowledgements} This material is based upon work supported by the National Science Foundation under Grant No. DMS-1928930 while the authors participated in a program hosted by the Mathematical Sciences Research Institute in Berkeley, California, during Spring 2022.  The second author was supported by the National Science Foundation under Grant No.\ DMS-2002951. 

\section{Hubbard trees and degree-\textit{d} polynomials}
Throughout this paper, we rely heavily on the established theory of Hubbard trees \cite{DH1,DH2}.  We will use a modification of Poirier's conditions for Hubbard trees to allow us to define (topological) Hubbard trees for unobstructed branched covers $\C\to\C$ \cite{DH,poirier}.

\subsection{Hubbard trees} Let $f:\C\rightarrow\C$ be a post-critically finite branched cover with post-critical set $P_f$.  For our purposes, a tree $T$ is a finite graph with no cycles, embedded in $\C$ such that:
\begin{enumerate}
    \item $P_f$ is contained in the vertex set of $T$ and
    \item all leaves of $T$ are in $P_f$.
\end{enumerate}
The preimage $f^{-1}(T)$ does not, in general, satisfy the definition of a tree given above.  The {\it lift} of a tree $T$, denoted $\widetilde{T}$, is the hull of $f^{-1}(T)$ relative to $P_f$.  That is, any edge of $f^{-1}(T)$ that is not contained in a path between a pair of points in $P_f$ is contracted to a point. Thus $\widetilde{T}$ is a tree. We say two trees $T$ and $T'$ are {\it isomorphic} if they are they are homeomorphic as topological spaces.

Let $f$ be a polynomial of degree $d\geq 2$.  Let $P_f$ be the post-critical set for $f$.  The {\it Hubbard tree} $H_f$ for $f$ is a subset of $\C$ comprised of regulated arcs of the filled Julia set of $f$ that connect pairs of points of $P_f$.  An important feature of the Hubbard tree of $f$ is that it is invariant under $f$, that is, $f(H_f)\subseteq H_f$.  Likewise, it is also true that $f^{-1}(H_f)\subseteq H_f$ and that $H_f$ is isotopic to $\widetilde{H}_f$.

Let $g$ be a branched cover $\C\to\C$ that is equivalent to $f$.  Then we can define the (topological) Hubbard tree for $g$, denoted $H_g$, as the pullback of $H_f$ under the equivalence between $f$ and $g$.  In this case, the abstract trees for $H_f$ and $H_g$ are isomorphic.

\p{Angle assignments} The combinatorial structure of a Hubbard tree is not sufficient to distinguish the combinatorial equivalence class of a branched cover $\C\to\C$.  That is, there are inequivalent polynomials that have isomorphic Hubbard trees.  There are various different types of additional information that one can provide to distinguish post-critically finite branched covers.  Following Poirier (see \cite{poirier}), we use an invariant angle assignment, which we define as follows.  Because we think of a (Hubbard) tree as a subset of $\C$, the edges that meet at any vertex have an associated cyclic order.  An {\it angle assignment for a vertex $v$} is a set of angle measures between each pair of (not necessarily distinct) edges that have endpoint $v$ and are adjacent in the cyclic order around $v$ such that the measures of the angles sum to $2\pi$.  An {\it angle assignment for a tree $T$} is the union of angle assignments at each of the vertices of $T$.  

The tree lift $\widetilde{T}$ inherits an angle assignment from $T$, as follows.  Let $\angle$ be an angle of $T$ adjacent to the vertex $v\in T$ with measure $|\angle|$.  Let $\widetilde{v}\in f^{-1}(v)$, and let $\nu(\widetilde{v})$ denote the local degree of $\widetilde{v}$ under $f$.  There exists a preimage of the angle $\angle$ at $\widetilde{v}$, denoted $\widetilde{\angle}$.  Assign the measure of $\frac{|\angle|}{\nu(\widetilde{v})}$ to the angle $\widetilde{\angle}$.  By applying this process to each angle in $T$, we define an angle assignment on $f^{-1}(T)$.  To define an angle assignment on $\widetilde{T}$, consider any edge $e$ in $f^{-1}(T)$ that is contracted to a point of $\widetilde{T}$.  The edge $e$ is the side of two angles $\angle_1$ and $\angle_2$.  When $e$ is contracted to a point of $\widetilde{T}$, the two angles $\angle_1$ and $\angle_2$ are replaced with a new angle $\angle'$.  Assign the measure of $\angle'$ to be $|\angle_1|+|\angle_2|$. We say an angle assignment on $T$ is {\it invariant} if:
\begin{enumerate}
    \item $T$ is isotopic to $\widetilde{T}$ and
    \item each angle of $T$ has the same measure as the corresponding angle (under the isotopy) of $\widetilde{T}$.
\end{enumerate}
Given a branched cover $f:\C\to\C$, it follows from the work of Douady--Hubbard \cite{DH1,DH2} or Poirier \cite{poirier} that a Hubbard tree $H_f$, an invariant angle assignment on $H_f$, and the restriction of $f$ to $\widetilde{H}_f$ suffice to determine the equivalence class of $f$.

\subsection{The degree-\textit{d} polynomials with a 3-periodic critical cycle}\label{sec:3perpolys}
In this section we describe the Hubbard trees for all unicritical polynomials of degree $d$ with a $3$-periodic critical point.  We first count the equivalence classes of such polynomials.

\p{Counting parameter rays} Every unicritical polynomial of degree $d$ is affine conjugate to $z^d+c$ for some $c \in \C$.
There exist exactly $d^2-1$ polynomials in this family that have a critical cycle of period $3$. Indeed, this may be seen by counting angles of parameter rays in the Mandelbrot set.  The parameter rays that land on a hyperbolic component of the Mandelbrot set that contains a polynomial with a critical cycle of period dividing 3 exactly comprise the set of parameter rays with angles in $\left\{\frac{i}{d^3-1}\mid i \in \{0,1,2,...,d^3-2\}\right\}$.  Among such rays, the rays at the angles $0, \frac{1}{d-1},\frac{2}{d-1},...,\frac{d-2}{d-1}$ land on the unique hyperbolic component of period 1.  Of the remaining $d^3-d $ rays, groups of $d$ rays each land on the same hyperbolic component.  Thus there are $(d^3-d)/d=d^2-1$ hyperbolic components that contain unicritical polynomials with critical cycle of period $3$. But the affine conjugacy class (and therefore the equivalence class) of each polynomial $z^d+c$ consists of the $d-1$ polynomials $z^d + \omega^j c$, where $\omega = e^{\frac{2 \pi i}{d-1}}$ and $j \in \{1,2,...,d-1\}$. Thus, there are exactly $d+1$ equivalence classes of unicritical polynomials that have a critical cycle of exact period $3$.

Alternatively, we observe that every polynomial of the form $z^d+c$ with $c\neq 0$ is conjugate to the polynomial $p_\lambda(z) = \lambda(1+\frac{z}{d})^d$, where $\lambda = dc^{d-1}$. In the polynomial family $\{p_\lambda\}$, the polynomials $p_{\lambda_1}$ and $p_{\lambda_2}$ are conjugate if and only if $\lambda_1 = \lambda_2$. In this family, there are exactly $d+1$ distinct solutions to the equation $p_\lambda^{\circ 3}(0) = 0.$

\p{Hubbard trees of polynomials with three critical points} Let $f$ be a unicritical polynomial of degree $d$ with a 3-periodic critical point.  Let $p_0$ be the critical point of $f$, let $p_1=f(p_0)$ be the critical value, and let $p_2=f^2(p_0)$.

The $d+1$ equivalence classes of degree $d$ polynomials that have a critical point of period 3 can be distinguished by their Hubbard trees and an invariant angle assignment.  There are only two combinatorial structures for (Hubbard) trees with three post-critical points: \begin{enumerate}
    \item a tripod, that is, a graph where $\{p_0,p_1,p_2\}$ are all leaves and there is an unmarked trivalent vertex or 
    \item a path of length 2 (two of the post-critical points are leaves).
\end{enumerate}
\p{The rabbit $R_d$ and corabbit $\overline{R}_d$} The rabbit polynomial $R_d$ and corabbit polynomial $\overline{R}_d$ are the two polynomials (up to equivalence) that have a tripod as their Hubbard tree.  Both polynomials cyclically permute edges.  Therefore the only possible invariant angle structure is $\frac{2\pi}{3}$ in both case.  They can be distinguished, however because $R_d$ rotates the edges of its Hubbard tree counterclockwise and $\overline{R}_d$ rotates the edges of its Hubbard tree clockwise.  Because a polynomial is determined by its Hubbard tree, the invariant angle structure, and the edge map of its Hubbard tree, we may take this to be the definition of $R_d$ and $\overline{R}_d$.  Where $d=5$, the Hubbard tree for $R_5$ is shown as a subset of the filled Julia set in Figure \ref{fig:R} and the Hubbard tree for $\overline{R}_5$ is a subset of the filled Julia set in Figure \ref{fig:CR}.

\p{The airplanes $A_{d,i}$} The remaining $d-1$ equivalence classes of polynomials have Hubbard trees that are paths of length 2.  Moreover, the vertex of valence 2 must be $p_0$ because the critical point is the only point for which $f$ maps a neighborhood $d$-to-1 to its image.  
The polynomials are distinguished by the invariant angle structure on their Hubbard tree.  Let $\angle$ be the counterclockwise angle between the edge $e_1$ with endpoints $p_0$ and $p_1$ and the edge $e_2$ with endpoints $p_0$ and $p_1$.  For any $i \in \{1,2,...,d-1\}$, there is a polynomial such that $\angle$ has an invariant angle assignment of $\frac{2\pi i}{d}$.  Define $A_{d,i}$ to be the polynomial where the invariant measure of $\angle$ is $\frac{2\pi i}{d}$.  Figures \ref{fig:A1}-\ref{fig:A4} show the filled Julia sets and Hubbard trees (with angles) for $A_{5,1}$ through $A_{5,4}$, respectively.

\begin{figure} 
\centering
     \begin{subfigure}{0.27\textwidth}
         \includegraphics[width=\textwidth]{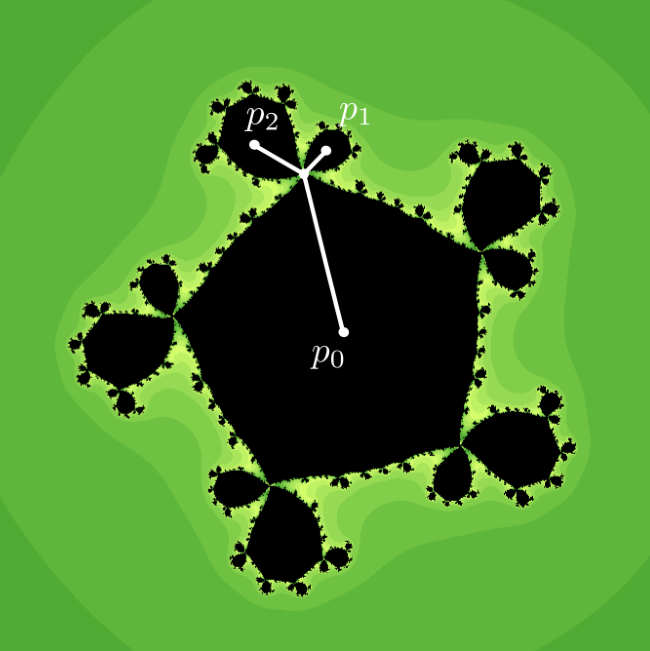}
         \caption{$R_5$}
         \label{fig:R}
     \end{subfigure}\hfill
     \begin{subfigure}{0.27\textwidth}
         \includegraphics[width=\textwidth]{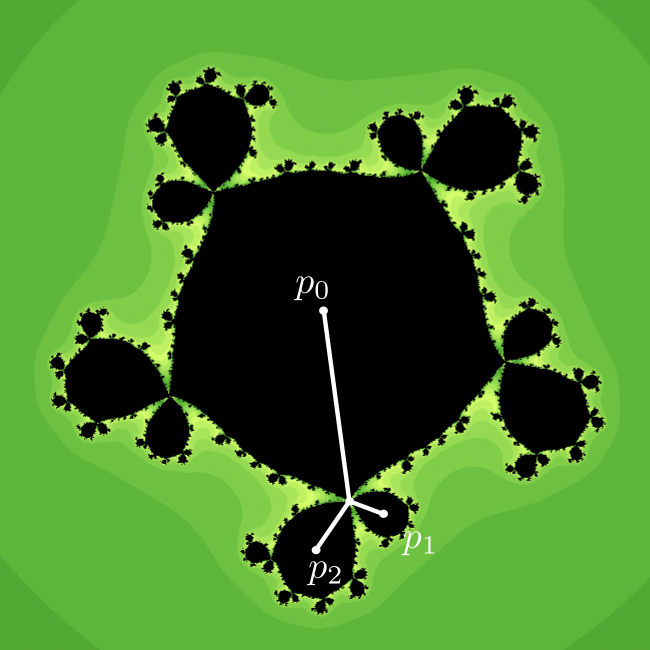}
         \caption{$\overline{R}_5$}
         \label{fig:CR}
     \end{subfigure}\hfill
     \begin{subfigure}{0.27\textwidth}
         \includegraphics[width=\textwidth]{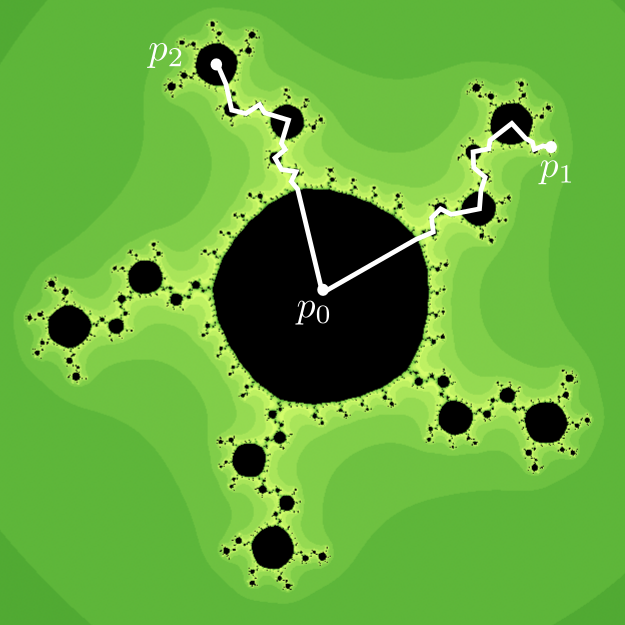}
         \caption{$A_1$}
         \label{fig:A1}
     \end{subfigure}\hfill      
     \begin{subfigure}{0.27\textwidth}
         \includegraphics[width=\textwidth]{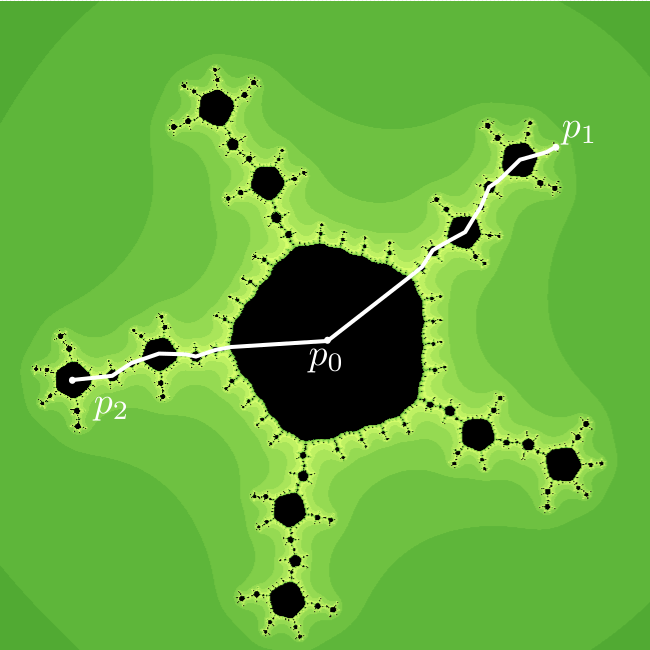}
         \caption{$A_2$}
         \label{fig:A2}
     \end{subfigure}\hfill     
     \begin{subfigure}{0.27\textwidth}
         \includegraphics[width=\textwidth]{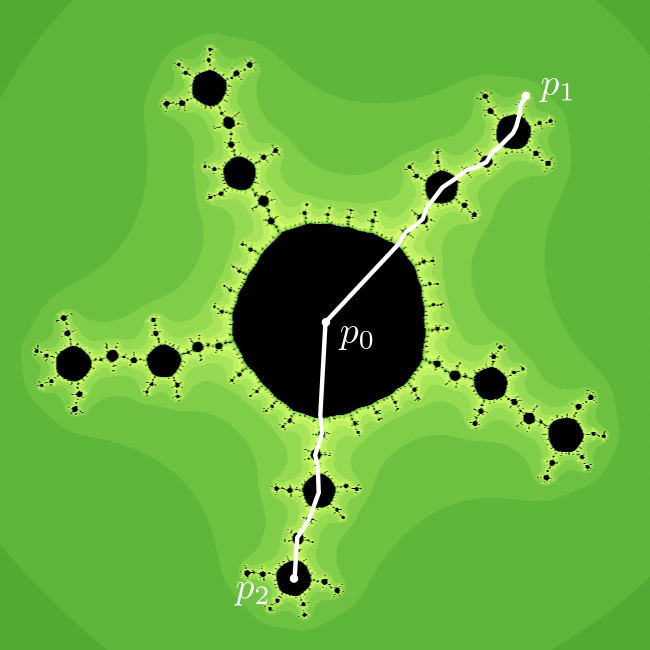}
         \caption{$A_3$}
         \label{fig:A3}
     \end{subfigure}\hfill     
     \begin{subfigure}{0.27\textwidth}
         \includegraphics[width=\textwidth]{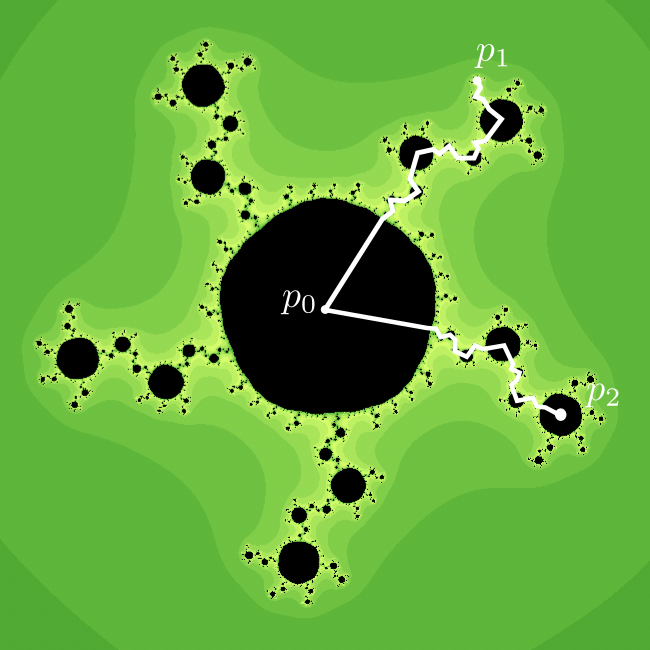}
         \caption{$A_4$}
         \label{fig:A4}
     \end{subfigure}
     \caption{The Julia sets and Hubbard trees for the unicritical polynomials of degree $5$ with 3-periodic critical point.}
     \label{fig:per3_polynomials}
\end{figure}

\section{Reduction Formulae}\label{sec:reduction}
We prove Theorem \ref{thm:degree_d} in two steps, following the original proof of Bartholdi-Nekraschevych.  In Lemma \ref{lem:reduction}, we give an algorithm that determines a map to which $D_{x_d}^mR_d$ is equivalent and that either belongs to a finite set (of base cases) or is of the form $D^k_{x_d}R_d$ where $k\leq m$.  In the next section, we find a Hubbard tree for each of the base cases.

\p{Lifting} Let $P_d$ be the post-critical set of $R_d$.  A homeomorphism $h:(\C,P_d)\rightarrow(\C,P_d)$ is said to be liftable under $R_d$ if there exists a (unique) homeomorphism $\widetilde{h}:(\C,P_d)\rightarrow(\C,P_d)$ such that $R_d\widetilde{h}=hR_d$.  In this case, $hR_d$ is equivalent to $\widetilde{h}R_d$.  We use the notation
$$h\rsquigarrow{}{3}\widetilde{h}$$ to describe the (directional) equivalence between $hR_d$ and $\widetilde{h}R_d$.

The isotopy classes of homeomorphisms that lift under $R_d$ form a finite index subgroup of $\PMod(\C,P_d)$ called the {\it liftable mapping class group} $\LMod(\C,P_d)$.  As described above, there is a homomorphism $\psi: \LMod(\C,P_d)\to\PMod(\C,P_d)$ defined for $h\in\LMod(\C,P_d)$ as $\psi(h)=\widetilde{h}$.  

Following Bartholdi--Nekrashevych, we use $\psi$ to define a version of lifting for any $h\in\PMod(\C,P_d)$.  For any $h\in\PMod(\C,P_d)$ there exists $g\in\PMod(\C,P_d)$ such that $g^{-1}h\in\LMod(\C,P_d)$.  Then $g^{-1}hR_d$ is equivalent to $\psi(g^{-1}h)R_d$.  Moreover, $hR_d$ is equivalent to $\psi(g^{-1}h)gR_d$ (\cite[Lemma 5.1]{BLMW}).  We use the notation: 
$$h\overset{g}{\rsquigarrow{}{5}}\psi(g^{-1}h)g$$ to denote the equivalence between $hR_d$ and $\psi(g^{-1}h)gR_d$ obtained by lifting $g^{-1}h$ under $R_d$.  In particular, the superscript $g$ indicates which coset representative of $h\LMod(\C,P_d)$ we choose.  If $h\in\LMod(\C,P_d)$, the notation $h\rsquigarrow{}{3}\widetilde{h}$ is a special case of the notation $$h\overset{g}{\rsquigarrow{}{5}}\psi(g^{-1}h)g$$ where $g$ is the identity, which we suppress.

\p{Branch cuts} Let $f:\C\rightarrow \C$ be a postcritically finite branched cover of degree $d\geq 2$.  A {\it branch cut} $B$ is a union of arcs such that:
\begin{itemize}
    \item each endpoint of each arc in $B$ is a critical value of $f$ (possibly infinity),
    \item each critical value of $f$ is an endpoint of an arc of $B$, and
    \item the complement of $f^{-1}(B)$ in $\C$ consists of $d$ components.
\end{itemize} If $f$ is unicritical, $B$ can be chosen to be a single arc $b$ joining the unique critical value to infinity.  A {\it special branch cut} for $f$ is a branch cut such that all points in the post-critical set $P_f$ for $f$ are contained in the closure of a single component of $\C \setminus f^{-1}(b)$.

\p{Intersections with a branch cut} Let $\gamma$ be an arc in $(\C,P_f)$ with endpoints in $P_f$, and let $b$ be a branch cut for $f$. The preimage $f^{-1}(\gamma)$ intersects the preimage $f^{-1}(b)$ at $d|\gamma\cap b|$ points.  Moreover, if we assign an orientation to $b$ and $\gamma$, the points of intersection $\gamma\cap b$ will inherit an orientation.  The orientation of each point of $\gamma\cap b$ will lift to an orientation of the corresponding $d$ preimages of $f^{-1}(\gamma)$.  The {\it geometric intersection} of $\gamma$ and $b$ is the minimum of $|\gamma\cap b|$ over all arcs homotopic to $\gamma$.  The {\it algebraic intersection} of $\gamma$ and $b$ is the sum of the signed (from the orientation) intersections of the homotopy class representative of $\gamma$ that realizes the algebraic intersection.

\p{Defining arcs} Let $c$ be a curve that bounds a disk that contains two points $p,q$ in $P_f$.  Then $c$ is homotopic to the boundary of a neighborhood of a simple arc $\gamma_c$ with endpoints at $p$ and $q$.  We call $\gamma_c$ the {\it defining arc} of $c$.  

Recall that a curve is trivial if it is homotopic to a point.  The next lemma gives a condition for when the preimage of a (non-trivial) curve is trivial.
\begin{lemma}\label{lem:branch_cut_intersection}
Let $f:\C\to\C$ be a post-critically finite branched cover of degree $d$ and let $b$ be a special branch cut for $f$.  The complement of $f^{-1}(b)$ in $\C$ consists of $d$ (open) components.  Label these components counterclockwise as $\Gamma_0,\cdots, \Gamma_{d-1}$ such $\Gamma_0$ is the component such that its closure contains the post-critical set.  

Let $c$ be a curve in $(\C,P_f)$ that bounds a disk containing two points in $P_f$ such that neither point is the critical value.  Let $\gamma_c$ be the defining arc of $c$.  Suppose the algebraic intersection of $\gamma_c$ and $b$ is $i$.  Then for each $0\leq j\leq d-1$, there is a path lift of $\gamma_c$ such that the endpoints of the path lift are in $\Gamma_j$ and $\Gamma_{(j-|i|)\mod d}$. In particular, each component of $f^{-1}(c)$ is trivial if and only if $d\nmid i$.
\end{lemma}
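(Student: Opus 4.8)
The plan is to use the special branch cut to make $f$ an honest covering map away from $b$, and then to read off the lift of $\gamma_c$ one sector-crossing at a time. First I would record the local picture. Since $f$ is unicritical and $b$ joins the unique critical value to $\infty$, the set $\C\setminus b$ is simply connected and contains no critical value, so $f\colon f^{-1}(\C\setminus b)\to\C\setminus b$ is a degree-$d$ covering of a simply connected set. Hence $f^{-1}(\C\setminus b)$ is exactly the $d$ disjoint open sectors $\Gamma_0,\dots,\Gamma_{d-1}$, each mapped homeomorphically onto $\C\setminus b$, and the arcs of $f^{-1}(b)$ are their shared boundaries, all meeting at the critical point. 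Choosing $b$ to avoid $P_f\setminus\{v\}$ and putting $\gamma_c$ in general position, neither endpoint of $\gamma_c$ lies on $b$, so each of $p,q$ has exactly one preimage in each sector; I write $\tilde p_j,\tilde q_j\in\Gamma_j$.

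For the sector statement I would lift $\gamma_c$ as a path after orienting both $\gamma_c$ and $b$. A lift beginning at $\tilde p_j$ remains in one sector until $\gamma_c$ meets $b$; at each transverse crossing the lift crosses a single arc of $f^{-1}(b)$ and moves to an adjacent sector. Because $f$ is orientation preserving and the sectors are cyclically ordered counterclockwise, a positively signed crossing of $b$ advances the sector index by $+1$ and a negative one by $-1$. Summing over all crossings, the net change of index between the endpoints of the lift equals the algebraic intersection number of $\gamma_c$ with $b$, namely $i$; so the lift from $\tilde p_j$ ends at $\tilde q_{(j+i)\bmod d}$. Letting $j$ range over all residues and reindexing (the lift whose endpoints sit in $\Gamma_j$ and $\Gamma_{(j-|i|)\bmod d}$ is the one starting in $\Gamma_{j\mp i}$), this yields exactly the asserted pairing, the choice of sign and the absolute value being absorbed by the freedom in orienting $\gamma_c$ and in relabeling. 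I expect the main obstacle to be precisely this orientation bookkeeping: matching the sign of a crossing of $b$ against the counterclockwise order of the sectors, so that the net index shift is the \emph{algebraic} and not merely the geometric intersection number.

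For the ``in particular'' I would view $f^{-1}(c)$ as the boundary of a regular neighborhood of $f^{-1}(\gamma_c)$, which is a union of $d$ disjoint lift arcs (disjoint because $\gamma_c$ avoids $v$, so $f^{-1}(\gamma_c)$ avoids the critical point). Each component of $f^{-1}(c)$ bounds a thin disk around one lift arc and therefore, after isotoping $\gamma_c$ off the remaining marked points, encloses exactly the two endpoints of that arc among the points of $f^{-1}(P_f)$. Such a curve is essential in $(\C,P_f)$ if and only if both endpoints lie in $P_f$. Here I would invoke the defining property of the special branch cut, $P_f\subseteq\overline{\Gamma_0}$: for $j\neq 0$ the preimages $\tilde p_j,\tilde q_j$ lie in open sectors disjoint from $\overline{\Gamma_0}$ and so are never post-critical, whereas the $P_f$-preimage of $p$ (which exists since the critical orbit is periodic, giving $f(P_f)=P_f$) lies in $\overline{\Gamma_0}$ and, as its image $p\notin b$, must lie in the open sector $\Gamma_0$; thus it is exactly $\tilde p_0$, and likewise for $q$. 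Consequently a lift arc has both endpoints in $P_f$ precisely when it runs from $\Gamma_0$ to $\Gamma_0$, i.e. when the sector shift vanishes, i.e. when $d\mid i$. Therefore every component of $f^{-1}(c)$ is trivial exactly when $d\nmid i$. The delicate step to verify carefully is this final identification—that among all $d$ preimages of $p$ (resp. $q$) the post-critical one is exactly the one in $\Gamma_0$—since it is what converts the purely combinatorial sector shift into genuine (in)essentiality of the curves.
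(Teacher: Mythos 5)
Your proof is correct and follows the same approach as the paper's (very terse) argument: track the sector index of a path lift across crossings of $b$ so that the net shift equals the algebraic intersection number, and then use that the only preimages of $p,q$ lying in $P_f$ are the ones in $\Gamma_0$ to characterize essentiality. The paper's proof is only a two-sentence sketch ("induct on $i$" plus the $\Gamma_0$ observation), so your write-up supplies the same idea with the bookkeeping made explicit.
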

\begin{proof}
For the first statement, induct on $i$.

A path lift of $\gamma_c$ is non-trivial if and only if both endpoints are in $P_f\subset\Gamma_0$.  Moreover, the endpoints of a lift of $\gamma_c$ are in the same component if and only if $d\mid i$.
\end{proof}

Lemma \ref{lem:reduction} is the key step that allows us to write $D_{x_d}^mR_d$ as a map with a lower power than $m$.  Let $x=x_d$, $y=y_d$, and $z=z_d$ be the curves in Figure~\ref{fig:rabbit_with_loops}.

Let $c$ be a simple closed curve in $(\C,R_d)$.  Recall that the Dehn twist about $c$ is trivial if and only if $c$ is trivial.  For example, in  Figure~\ref{liftedcurve}, we illustrate the curve $D_y^{-2}(z)$ and its lift under $R_5$. By Lemma~\ref{lem:branch_cut_intersection}, all components of $R_5^{-1}(D_y^{-2}(z))$ are trivial since $5 \nmid -2$, therefore any lift of the $D_{D_y^{-2}(z)}$ under $R_5$ is trivial.
\begin{lemma} \label{lem:reduction}
Let $m \in \Z$, $m = d^2k+d\ell+n$ with $k \in \Z,$ and $\ell,n \in \{0,1,...,d-1\}$.  Note that $k,\ell,$ and $n$ are unique.  We have 
\begin{align*}
    D_x^mR_d \simeq \begin{cases}
     D_x^kR_d & \ell =n\\
     D_y^{\ell-n}R_d  & \ell\neq n
    \end{cases}.
\end{align*}
\end{lemma}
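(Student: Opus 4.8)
The plan is to follow the Bartholdi--Nekrashevych lifting procedure, as packaged in the homomorphism $\psi$ and the rule $hR_d\simeq\psi(g^{-1}h)\,gR_d$, and to run it through one full period of the induced action on curves. First I would fix the combinatorial model: $(\C,P_d)$ is a four-times-marked sphere (the three post-critical points together with $\infty$), so it carries exactly three isotopy classes of essential simple closed curves, namely $x,y,z$, each separating $P_d\cup\{\infty\}$ into complementary pairs. Since $R_d$ cyclically permutes $p_0\mapsto p_1\mapsto p_2\mapsto p_0$, taking preimages induces a $3$-cycle on $\{x,y,z\}$; I fix the special branch cut $b$ running from the critical value to $\infty$ and orient everything once and for all.

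The key preliminary computation is the behaviour of $\psi$ on the Dehn twists about $x$, $y$, $z$. Two of the three curves bound a disk containing the critical value; a Riemann--Hurwitz count shows that the preimage of such a curve is a single curve covering it with degree $d$, so that only the $d$-th power of the twist is liftable, and I would verify $\psi(D_x^d)=D_y$ and $\psi(D_y^d)=D_z$. The third curve $z$ bounds a disk with no critical value, so its preimage is $d$ disjoint degree-one curves; by Lemma~\ref{lem:branch_cut_intersection} exactly one of them is essential, giving $\psi(D_z)=D_x$. More importantly, the same lemma applied to the twisted curves $D_y^{j}(z)$ (whose defining arc meets $b$ algebraically $j$ times, up to a fixed sign) shows that the preimage of $D_y^{j}(z)$ is entirely inessential precisely when $d\nmid j$; this triviality criterion is the engine of the whole argument.

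With these relations in hand I would run the procedure on $D_x^mR_d$. Writing $m=d(dk+\ell)+n$ and using that $D_x^d$---but not $D_x$---is liftable, one lift with coset representative $D_x^{n}$ gives $D_x^mR_d\simeq D_y^{dk+\ell}D_x^{n}R_d$. I then push through the remaining two steps of the $3$-cycle $x\to y\to z\to x$. Because the two steps passing through the critical value each cost a factor of $d$ while the $z$-step is free, the power is divided by $d^2$ rather than $d^3$---this is the source of the $d^2$-adic expansion. Carrying the two remainder digits through, the computation collapses to lifting a power $D_z^{k}$ in the presence of a conjugating twist $D_y^{\,\ell-n}$, i.e. to evaluating $\psi$ on a power of the twist about $D_y^{\,\ell-n}(z)$. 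By the criterion above this image is $D_x^{k}$ when $d\mid(\ell-n)$ and is trivial otherwise; since $\ell,n\in\{0,\dots,d-1\}$ we have $d\mid(\ell-n)$ if and only if $\ell=n$, which yields exactly $D_x^kR_d$ when $\ell=n$ and the base case $D_y^{\,\ell-n}R_d$ (with the $k$-dependence annihilated) when $\ell\neq n$.

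The main obstacle is the non-commutative bookkeeping in the last step: I must show that after the first lift the leftover factor $D_x^{n}$ combines with the digit $\ell$ to produce exactly the conjugating power $\ell-n$ of $D_y$ acting on $z$---with the correct sign, so that the relevant algebraic intersection with $b$ is $\ell-n$ rather than, say, $\ell+n$. Equivalently, the crux is to establish that $D_y^{n}D_x^{n}\in\LMod$ with $\psi(D_y^{n}D_x^{n})=1$ for $0\le n\le d-1$. This forces me to compute $\psi$ not merely on the generators $D_x,D_y,D_z$ but on these mixed words and their conjugates, choosing the coset representatives in $\PMod(\C,P_d)$ carefully; once the correct twisted curve is identified, Lemma~\ref{lem:branch_cut_intersection} finishes the argument.
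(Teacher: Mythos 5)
Your overall strategy is the paper's: decompose $m=d^2k+d\ell+n$, lift via $\psi$ with carefully chosen coset representatives, and use Lemma~\ref{lem:branch_cut_intersection} to kill twists about curves whose defining arcs meet the branch cut with algebraic intersection not divisible by $d$. Your preliminary computations ($\psi(D_x^d)=D_y$, $\psi(D_y^d)=D_z$, $\psi(D_z)=D_x$, the first lift producing $D_y^{dk+\ell}D_x^nR_d$, and the annihilation of the $k$-dependence when $\ell\neq n$) all match the paper. But you have correctly located, and then not closed, the one genuinely nontrivial step: showing that $D_y^{dk+\ell}D_x^nR_d$ reduces to $D_y^{dk+\ell-n}R_d$, i.e.\ that $D_y^nD_x^n$ is liftable with trivial lift. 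You state this as ``the crux'' and say it ``forces'' you to compute $\psi$ on mixed words ``once the correct twisted curve is identified,'' but $D_y^nD_x^n$ is not a twist about a single curve for $n\geq 2$, so Lemma~\ref{lem:branch_cut_intersection} does not apply to it directly, and no mechanism for identifying the relevant curve is given. As written this is a statement of intent, not an argument.

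The missing ingredient is the lantern relation $D_xD_yD_z=\mathrm{id}$ in $\PMod(\C,P_d)$ (a four-times-marked sphere). The paper uses it to peel off one factor at a time: substituting $D_y=D_x^{-1}D_z^{-1}$ gives
$D_y^{dk+\ell}D_x^n=D_y^{dk+\ell-1}D_x^{-1}D_z^{-1}D_x^{n}$, and with coset representative $g=D_y^{dk+\ell-1}D_x^{n-1}$ one gets $g^{-1}h=D_x^{-n}D_z^{-1}D_x^{n}=D_{D_x^{-n}(z)}^{-1}$, a twist about a \emph{single} curve whose defining arc has algebraic intersection $\pm n$ with the branch cut; since $1\leq n\leq d-1$, Lemma~\ref{lem:branch_cut_intersection} makes this lift trivially, yielding $D_y^{dk+\ell-1}D_x^{n-1}R_d$. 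Iterating $n$ times gives $D_y^{dk+\ell-n}R_d$, and your own case analysis (equivalently the paper's cases (1) and (2)) finishes from there. Equivalently, the lantern relation shows $D_y^nD_x^n$ is a product of inverse twists about the curves $D_x^{-j}(z)$ for $1\leq j\leq n$, each of which lifts trivially — which is exactly the identity $\psi(D_y^nD_x^n)=1$ you needed. Without the lantern relation (or an equivalent identity in $\PMod(\C,P_d)$), your proof does not close.
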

\begin{proof}
 We note that for any $r$, the defining arcs of the curves $D_x^{r}(z)$ and $D_y^{r}(z)$ have algebraic intersection $\pm r$ with the branch cut.  Then by Lemma~\ref{lem:branch_cut_intersection}, for all $r$ such that $d\nmid r$, the curves $D_x^{r}(z)$ and $D_y^{r}(z)$ lift to trivial curves. Thus $D_{D_x^r(z)}$ and $D_{D_y^r(z)}$ are trivial for all $r$ such that $d\nmid r$.

We consider three cases:
\begin{enumerate}
\item If $n=\ell=0$, then we have:
$$D_x^m=D_x^{d^2k} \rsquigarrow{}{3} D_y^{dk} \rsquigarrow{}{3} D_z^k \rsquigarrow{}{3} D_x^k.$$
\begin{figure} \centering
    \includegraphics[width=.25\textwidth]{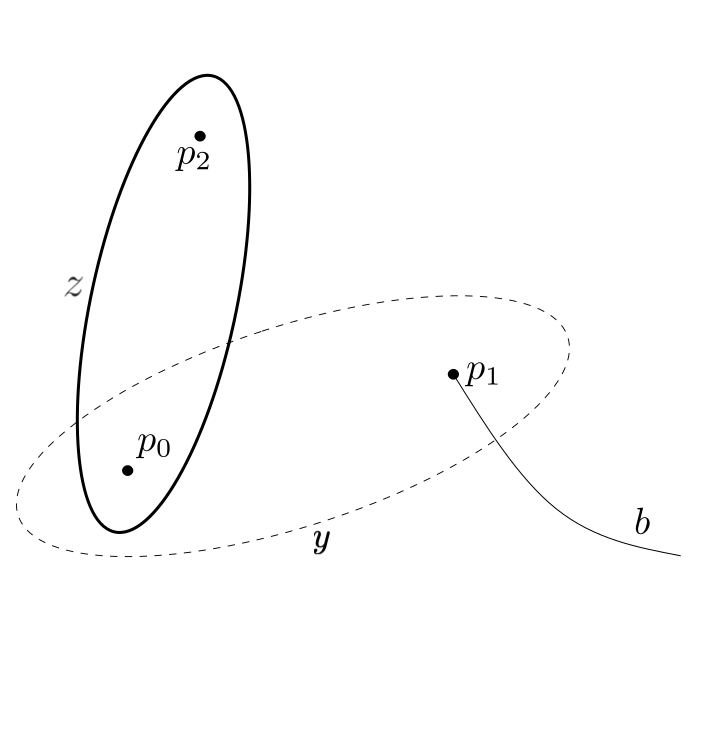}\quad
    \includegraphics[width=.25\textwidth]{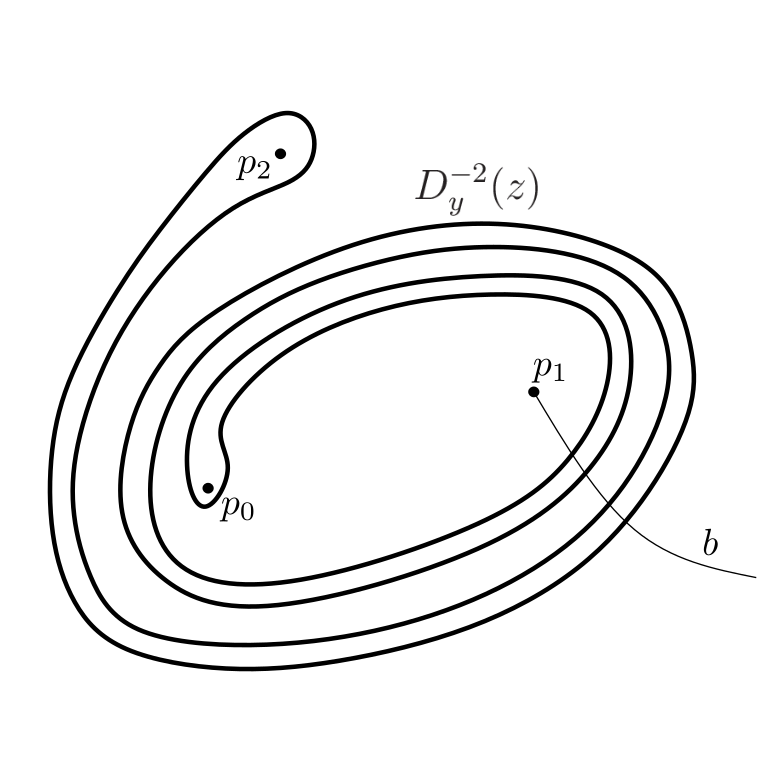}\quad
    \includegraphics[width=.4\textwidth]{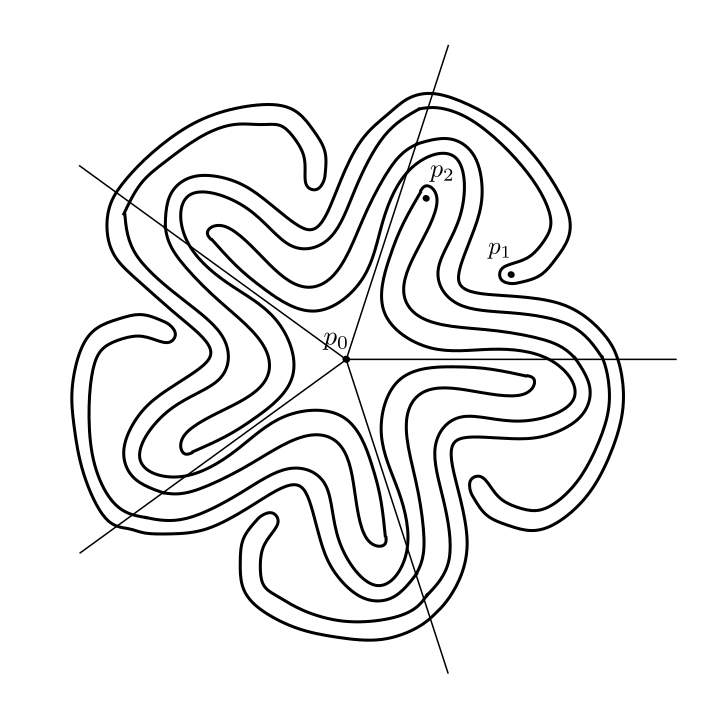}
    \caption{Left: The simple closed curves $y$ and $z$, Center: The simple closed curve $D_y^{-2}(z)$, Right: The preimage of $D_y^{-2}(z)$ under $R_d$ when $d=5$. Note that all components of this preimage are peripheral.}
    \label{liftedcurve}
\end{figure}
\item If $n=0,\ell\neq 0$, then we have:
$$D_x^m=D_x^{d^2k + d\ell} \rsquigarrow{}{3} D_y^{dk+\ell} \overset{D_y^\ell}{\rsquigarrow{}{5}} D_z^kD_y^\ell \overset{D_y^\ell}{\rsquigarrow{}{5}} \psi(D^k_{D_y^{-\ell}(z)})D_y^\ell = D_y^\ell.$$
    The last equality holds by Lemma~\ref{lem:branch_cut_intersection} since $D_y^{-\ell}(z)$ has algebraic intersection $\ell$ with the branch cut, and $d\nmid \ell$. 
    
     \item If $n\neq 0$, then we have:
    \begin{align*}
        D_x^{d^2k+d\ell+n}& \overset{D_x^n}{\rsquigarrow{}{5}} D_y^{dk+\ell}D_x^n = D_y^{dk+\ell-1}D_x^{-1}D_z^{-1}D_x^{n}\\ & 
        \overset{D_y^{dk+\ell-1}D_x^{n-1}}{\rsquigarrow{}{13}} \psi(D_{D_x^{-n}(z)}^{-1})D_y^{dk+\ell-1}D_x^{n-1}\\& = D_y^{dk+\ell-1}D_x^{n-1}. 
        \end{align*}
        The first equality follows from the lantern relation $D_xD_yD_z = id $.  The last line holds because $1\leq n\leq d-1$, so $d\nmid n$ and we may apply Lemma~\ref{lem:branch_cut_intersection}.
        
        The calculation above showed that $D_y^{dk+\ell}D_x^nR_d$ is equivalent to $D_y^{dk+\ell-1}D_x^{n-1}R_d.$  Performing this step $n$ times, we have that $D_y^{dk+\ell}D_x^nR_d$ is equivalent to  $$D_y^{dk+\ell-n}D_x^{n-n}R_d = D_y^{dk+\ell-n}R_d.$$ Finally, we have that
      \begin{align*}
          D_x^{d^2k+d\ell+n}R_d &\simeq D_y^{dk+\ell-n}R_d \\ & \simeq 
        \begin{cases}
        D_x^kR_d & \ell=n\\
        D_y^{\ell-n}R_d & \ell\neq n
        \end{cases}.
      \end{align*}
      \end{enumerate}
  \hspace{20pt}The last equivalence follows by cases (1) and (2). 
\end{proof}

\p{Example} We illustrate the reduction formulae where $d=5$ and  $m=23425$:
\begin{align*}
    23425 &= 5^2(937) + 5(0)+0\\
    937 & = 5^2(37) + 5(2) + 2\\
    37 & = 5^2(1) + 5(2) + 2\\
    1 & = 5^2(0) + 5(0) + 1.
 \end{align*} Therefore $D_x^mR_d \simeq D_x^{937}R_d \simeq D_x^{37}R_d \simeq D_xR_d \simeq \boxed{D_y^{-1}R_d}$.

\p{Reduction to base cases} More generally, for $m=d^2k+d\ell+n$ with $\ell=n$, Lemma~\ref{lem:reduction} returns a branched cover $D_x^kR_d$ with $k\leq m$.  We can then repeatedly apply Lemma~\ref{lem:reduction} until one of three ``base cases" occurs (we describe the process precisely in the proof of Theorem~\ref{thm:degree_d}):
\begin{enumerate}
    \item we obtain a map $D_y^{\ell-n}R_d$ for $0\leq|\ell-n|\leq d-1$ to which $D_x^mR_d$ is equivalent, 
    \item $m\geq 0$ and $D_x^mR_d$ reduces to $D_x^0R_d$, or
    \item $m<0$ and $D_x^mR_d$ reduces to $D_x^{-1}R_d$.
\end{enumerate}
It therefore suffices to compute $D_x^{-1}R_d$, and $D_y^iR_d$ for $1\leq i\leq d-1$ and $-(d-1)\leq i\leq -1$.

\section{Invariant trees}\label{sec:base_cases}
In this section, we will determine the base case polynomials $D_y^{i}R_d$ for $1\leq i\leq d-1$ and $-(d-1) \leq i \leq -1$, and for $D_x^{-1}R_d$ by finding their Hubbard trees. 

For each base case, we find an invariant trees using the tree lifting algorithm of Belk--Lanier--Margalit and the second author \cite{BLMW}.  However, we will not show this process.  We need only verify that each tree that we find is invariant under lifting and has an invariant angle structure under the corresponding map, thus satisfying the conditions of Poirier.  This proves that the invariant tree we found is indeed a Hubbard tree for the corresponding map.

\begin{figure}
    \centering
    \includegraphics[scale=.23]{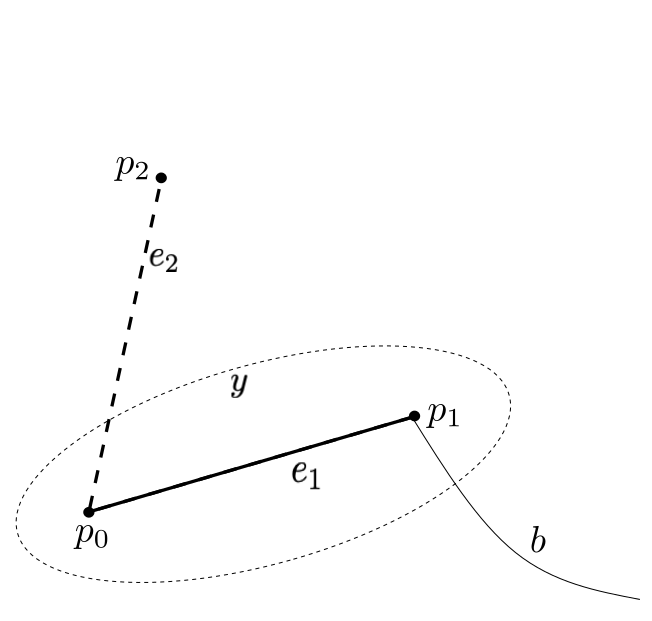}\quad \quad     \includegraphics[scale=.23]{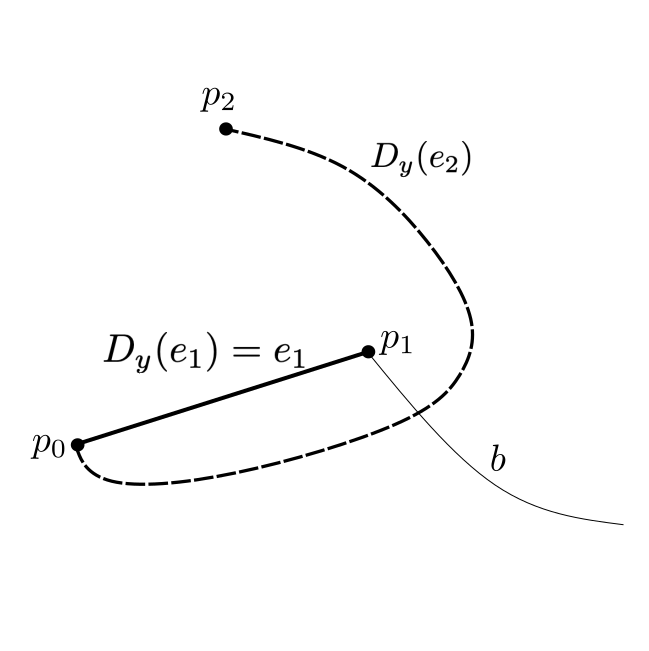}\quad\quad \includegraphics[scale=.23]{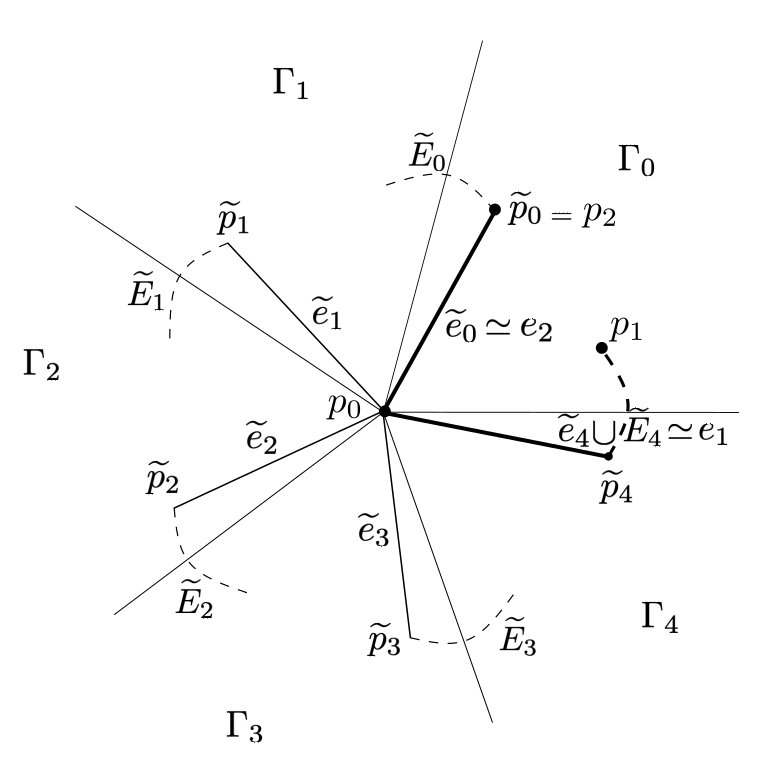}
    \caption{Left: The Hubbard tree $H_-$ for $D^i_yR_d$ with $-(d-1)\leq i\leq -1$. Center: a tree that is homotopic to $D_y(H_-)$.  Right: a tree that is homotopic to the preimage of $D_y(H_-)$ under $R_5$.}
    \label{fig:hubbard_tree_neg}
\end{figure}
\begin{proposition}\label{prop:base_case} Let $R_d$ be the degree-$d$ rabbit polynomial.  Then:
\begin{align*}
    D_y^i R_d & \simeq \begin{cases}
    A_{d,d-i} & 1\leq i\leq d-1\\
    A_{d,-i} & -(d-1)\leq i \leq -1\\
    \end{cases}
\end{align*}
\end{proposition}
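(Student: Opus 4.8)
The plan is to exhibit, for each base-case map $D_y^i R_d$, an explicit tree together with an angle assignment, and then to verify directly that this data satisfies Poirier's conditions: the tree is invariant under lifting by $D_y^i R_d$ (isotopic to its own tree lift), the angle assignment is invariant, and the induced edge map matches that of the candidate airplane $A_{d,d-i}$ (for $1 \le i \le d-1$) or $A_{d,-i}$ (for $-(d-1) \le i \le -1$). Since the excerpt tells us that a Hubbard tree, an invariant angle assignment, and the restriction of the map to the tree lift together determine the equivalence class, establishing this verification for the proposed tree proves the claimed equivalence.

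First I would treat the negative range $-(d-1) \le i \le -1$ using the tree $H_-$ depicted in Figure~\ref{fig:hubbard_tree_neg}, which is a path of length $2$ with the critical point $p_0$ as the valence-$2$ vertex (as required, since $p_0$ is the unique point whose neighborhood maps $d$-to-$1$). I would compute the image tree $D_y(H_-)$ (the center panel) and then take its preimage under $R_d$ and form the hull relative to $P_d$ to obtain the tree lift (the right panel). The key computation is to check that this tree lift is isotopic rel $P_d$ to $H_-$ itself, so that $H_-$ is invariant. Then, using the angle-lifting rule from the excerpt — at the critical point each preimage angle is the original angle divided by the local degree $d$, and contracted edges merge their two adjacent angles additively — I would compute the invariant angle at $p_0$ and confirm it equals $\frac{2\pi(-i)}{d} = \frac{2\pi |i|}{d}$, which by definition identifies the map with $A_{d,-i}$. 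The positive range $1 \le i \le d-1$ proceeds identically with the mirror-image tree, where the orientation of the twist reverses the sense of the angle, yielding the complementary angle $\frac{2\pi(d-i)}{d}$ and hence $A_{d,d-i}$.

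The main obstacle, and the step requiring the most care, is the angle bookkeeping at the critical vertex $p_0$ after lifting. Because $D_y^i$ twists the tree so that its defining arc crosses the branch cut, the preimage edges wind around $p_0$ a number of times controlled by $i$, and one must track precisely how the cyclic order of edges at $p_0$ is permuted and how the angle measures redistribute under the $\frac{1}{d}$ scaling and the subsequent edge contractions when forming the hull. Getting the sign and the residue mod $d$ correct here is exactly what distinguishes $A_{d,d-i}$ from $A_{d,i}$ and from $A_{d,-i}$, so I would verify it carefully against the $d=5$ pictures in Figures~\ref{fig:A1}--\ref{fig:A4} and the explicit entries of Table~\ref{tab:basecasesdeg5} (for example, $D_y^{-1} R_5$ should be $A_{5,1}$, matching the $m=1$ entry). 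Once the angle at $p_0$ is shown to be invariant with the correct measure, the remaining vertices are leaves in $P_d$ with no angle constraints, so Poirier's conditions are met and the identification is complete.
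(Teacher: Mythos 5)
Your proposal follows essentially the same route as the paper's proof: exhibit the candidate trees $H_-$ and $H_+$, lift the twisted tree under $R_d$, pass to the hull relative to $P_d$ to check invariance, and then track the angle at the critical point $p_0$ (which arises from the $\frac{1}{d}$-scaling of the full angle at the critical value $p_1$, with the branch-cut intersection number $i$ controlling which of the $d$ sectors separate the two surviving lifted edges) to pin down the airplane via Poirier's conditions. The paper carries out exactly this bookkeeping using Lemma~\ref{lem:branch_cut_intersection} to locate the endpoints of the path lifts in the components $\Gamma_j$, so your outline is correct and matches the paper's argument.
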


The Hubbard tree $H_-$ for $D_y^iR_d$ for $-(d-1)\leq i\leq -1$ is the tree in Figure \ref{fig:hubbard_tree_neg}.  The invariant angle between $e_1$ and $e_2$ (measured counterclockwise) has measure $ - \frac{2\pi i}{d} = \frac{2 \pi |i|}{d}$.  The Hubbard tree $H_+$ for $D_y^iR_d$ for $1\leq i\leq d-1$ is the tree in Figure \ref{fig:hubbard_tree_pos}.  The invariant angle between $e'_1$ and $e_2$ (measured counterclockwise) has measure $2\pi - \frac{2\pi  i}{d}$.  Figure~\ref{fig:hubbard_tree_neg} demonstrates that $H_-$ is invariant under $R_5^{-1}D_y$, and Figure~\ref{fig:hubbard_tree_pos} demonstrates that $H_+$ is invariant under $R_5^{-1}D_y^{-1}$; the proof below explains that the figures generalize for all $d$ and for $-(d-1)\leq i\leq -1$ and $1\leq i\leq d-1$, respectively.

\begin{proof}
Let $b$ be a special branch cut for $R_d$ (for instance, the arc $b$ in Figure~\ref{fig:hubbard_tree_neg}). By the definition of special branch cut, the preimage $R_d^{-1}(b)$ consists of $d$ arcs from $p_0$ to $\infty$ such that the complement of $R_d^{-1}(b)$ in $\C$ contains $d$ components and one of them contains both points $p_1$ and $p_2$.  Label these complementary components by $\Gamma_0,\cdots, \Gamma_{d-1}$ counterclockwise where $\Gamma_0$ is the component that contains $p_1$ and $p_2$.  The set $R_d^{-1}(p_0)$ contains $d$ points; name them $\widetilde{p}_0,\cdots,\widetilde{p}_{d-1}$ such that $\widetilde{p}_j\in\Gamma_j$.

We will show that $H_-$ is the Hubbard tree for $D_y^iR_d$ when $-(d-1)\leq i\leq 1$ and $H_+$ is the Hubbard tree for $D_y^iR_d$ when $1\leq i\leq d-1$  using the same basic strategy.  First we take the lift of the tree $D_{y}^{-i}(H_\pm)$ under $R_d$ by computing the path lifts of the edges of $H_\pm$ (the edges of $H_-$ are $e_1$ and $e_2$ in $H_-$, the edges of $H_+$ are $e'_1$ and $e_2$) and determining which are in the hull of $p_0,p_1,$ and $p_2$.  We then verify the desired angle assignment is invariant under lifting.

We first treat the case where $-(d-1)\leq i\leq -1$.  Note that $i$ is negative so $D_y^{-i}$ is a positive (left-handed) twist.  Observe that $e_1$ is invariant under $D_y^{-i}$ for all $i$.  The edge $D_y^{-i}(e_2)$ intersects the branch cut $b$ with both algebraic and geometric intersection $i$ (with appropriately chosen orientations of $b$ and $e_2$).  That is: all intersections of $D_y^{-i}(e_2)$ and $b$ have the same orientation (this an arc version of \cite[Proposition 3.2]{primer}). 

For each $0\leq j\leq d-1$, there is a path lift of $D_y^{-i}(e_1)=e_1$ based at $\widetilde{p}_j$, call it $\widetilde{e}_j$. 
Because the interior of $e_1$ is disjoint from the branch cut $b$, each $\widetilde{e}_j$ is a straight line segment from $\widetilde{p}_j$ to $p_0=R_d^{-1}(p_1)$ that is contained in $\Gamma_j$, as in Lemma \ref{lem:branch_cut_intersection}.  In particular, $\widetilde{e}_0$ is contained in the closure of $\Gamma_0$ and has endpoints $p_0$ and $p_2$. Since $\widetilde{e}_0$ is homotopic to a straight line segment relative to $\{p_0, p_1, p_2\}$, it is homotopic to $e_2$.

The arc $D_y^{-i}(e_2)$ twists counterclockwise $|i|$ times around $e_1$ (when oriented from $p_0$ to $p_2$).   
For each $0\leq j\leq d-1$, there is a path lift of $D_y^{-i}(e_2)$ based at $\widetilde{p}_j$ with opposite endpoint in $R_d^{-1}(p_2)$; call this $\widetilde{E}_j$.  Each $\widetilde{E}_j$ comprises a distinct component of $R_d^{-1} (D_y^{-i}(e_2))$. Moreover, each $\widetilde{E}_j$ intersects $R_d^{-1}(b)$ at $|i|$ points, rotating counterclockwise from $\widetilde{p}_j$ to a point in $R^{-1}(p_2)$.  We may then apply Lemma~\ref{lem:branch_cut_intersection} to determine that the opposite endpoint of $\widetilde{E}_j$ is in $\Gamma_{(j+|i|)\mod d}$.

The preimage $R_d^{-1}(D_y^{-i}(H_-))$ is comprised of the union of $\{\widetilde{e}_j\mid 0\leq j\leq d-1\}$ and $\{\widetilde{E}_j\mid 0\leq j\leq d-1\}$.  For each $j$, the union of $\widetilde{e}_j$ and $\widetilde{E}_j$ is a path between an element of $R_d^{-1}(p_2)$ and $p_0$ (via $\widetilde{p}_j$).  The only such path that is in the hull of $\{p_0,p_1,p_2\}$ in $R_d^{-1}(D_y^{-i}(H_-))$ is  $\widetilde{e}_{d+i}\cup\widetilde{E}_{d+i}$, which contains $p_1$, $\widetilde{p}_{d+i}$, and $p_0$.  This edge is homotopic to $e_1$.  The other edge in the hull of $\{p_0,p_1,p_2\}$ in $R_d^{-1}(D_y^{-i}(H_-))$ is $\widetilde{e}_0$, which has endpoints $p_2$ and $p_0$ and is homotopic to $e_2$.  Thus the tree $H_-$ is invariant under lifting.

\begin{figure}
    \centering
    \includegraphics[scale=.3]{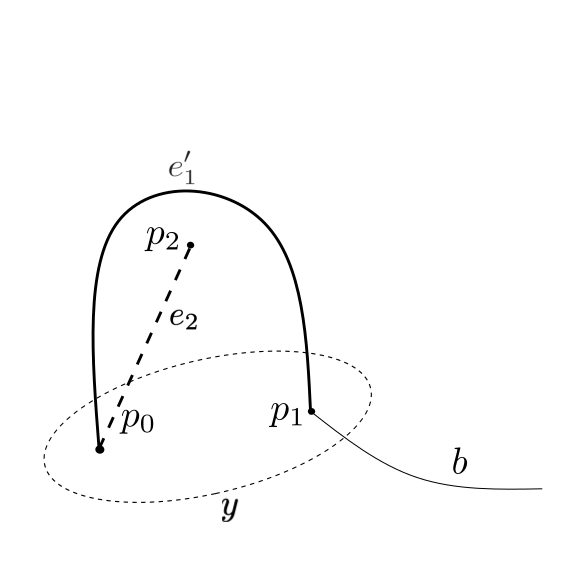}\quad \quad     \includegraphics[scale=.3]{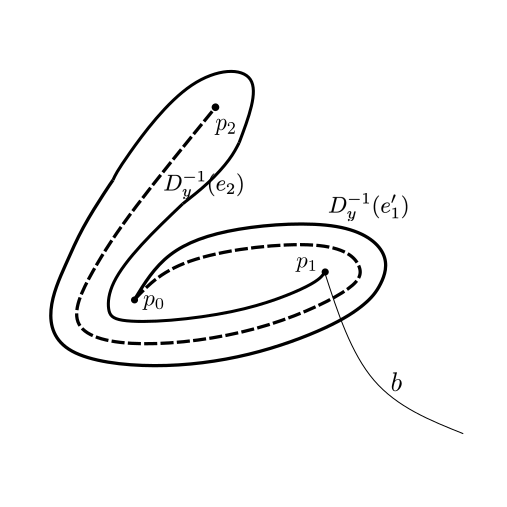}\quad\quad \includegraphics[scale=.25]{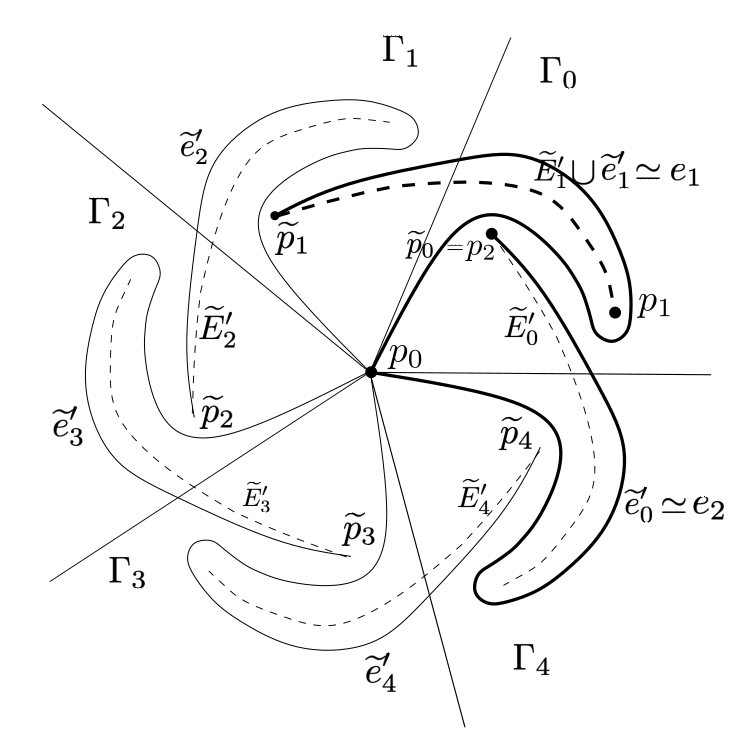}
    \caption{Left: The Hubbard tree $H_+$ for $D^i_yR_d$ with $1\leq i\leq d-1$. Center: a tree that is homotopic to $D_y^{-1}(H_+)$.  Right: a tree that is homotopic to the preimage of $D_y^{-1}(H_+)$ under $R_5$.}
    \label{fig:hubbard_tree_pos}
\end{figure}

To see that the angle between $e_1$ and $e_2$ (measured counterclockwise) of $-\frac{2\pi i}{d}$ is invariant under $D_y^iR_d$ for $-(d-1)\leq i\leq -1$, we track the preimage of all angles under the lifting process.  In particular, we observe that the preimage of the angle between $e_1$ and $e_2$ is an angle at an unmarked vertex (of valence 2) in $R_d^{-1}(D_y^{-i}(H_-))$, which is therefore irrelevant.  However, because $R_d^{-1}(p_1)=p_0$, the preimage of the angle of $2\pi$ at $p_1$ (measured from $e_1$ to itself) consists of $d$ angles between the $d$ path lifts of $e_1$, each of measure $\frac{2\pi}{d}$.  The path lifts of $e_1$  in the hull of $R_d^{-1}(D_y^{-i}(H_-))$ relative to $\{0,p_1,p_2\}$ are $\widetilde{e}_0$ and $\widetilde{e}_{d+i}$. There are $|i-1|$ path lifts of $e_1$ under $R_d$ between  $\widetilde{e}_{d+i}$ $\widetilde{e}_0$ in the cyclic counterclockwise ordering of vertices adjacent to $p_0$.  Therefore the angle between $\widetilde{e}_0$ and $\widetilde{e}_{d+i}$ is $\frac{2\pi |i|}{d}$, when measured counterclockwise.  
Since $i$ is negative, the counterclockwise angle between the edges of the lift homotopic to $e_1$ and $e_2$ respectively is $-\frac{2\pi i}{d}$, and we have shown that $D_y^iR_d$ is equivalent to $A_{d,-i}$ for $-(d-1)\leq i\leq -1$.

Now we show that the tree $H_+$ in Figure \ref{fig:hubbard_tree_pos} is invariant under $D_y^iR_d$ for $1\leq i\leq d-1$.  
Both edges $e'_1$ and $e_2$ (in Figure~\ref{fig:hubbard_tree_pos}) meet at the critical point $p_0$.  Orient both edges away from $p_0$.  
The arcs $D_y^{-i}(e'_1)$ and $D_y^{-i}(e_2)$ each intersect $b$ in $i$ points and all intersections have the same orientation (ie. both arcs are directed clockwise at the points of intersection with $b$).  For each $0\leq j\leq d-1$, there is a path lift of $D_y^{-i}(e_1)$ under $R_d$ based at $\widetilde{p}_j$ and a path lift of $D_y^{-i}(e_2)$ based at $\widetilde{p}_j$; call these $\widetilde{e}'_j$ and $\widetilde{E}'_j$ respectively.  Each $\widetilde{e}'_j$ and $\widetilde{E}'_j$ intersects $R_d^{-1}(b)$ at $i$ points in a clockwise direction until it reaches its other endpoint.  The other endpoint of $\widetilde{E}'_j$ is an element of $R_d^{-1}(p_2)$ and by Lemma~\ref{lem:branch_cut_intersection}, the this endpoint is in $\Gamma_{(j-i)\mod d}$. The other endpoint of $\widetilde{e}'_j$ is $p_0$ for all $j$.  
Then $\widetilde{e}'_0$, which has endpoints $p_2$ and $p_0$, is in the hull of $\{p_0,p_1,p_2\}$ in $R_d^{-1}(D_y^{-i}(H_+))$.  In fact, as long as $1\leq i\leq d-1$, $\widetilde{e}'_0$ is homotopic to a straight line segment, that is: $e_2$.  Moreover, the union of $\widetilde{E}'_i$ and $\widetilde{e}'_i$ forms  an edge from $p_1$ to $p_0$ (via $\widetilde{p}_i$). As long as $1\leq i\leq d-1$, this edge is homotopic to $e'_1$. 

As in the case where $-(d-1)\leq i\leq -1$, to verify that the angle $2\pi - \frac{2\pi i}{d}$ between $e'_1$ and $e_2$ (measured counterclockwise) is invariant under $R_d^{-1}D_y^{-i}$, we need only consider the angle between the path lifts of $e'_1$ that are in the hull of $\{p_0,p_1,p_2\}$.  Indeed, we saw above that $\widetilde{e}'_0$ is always homotopic to $e_2$ and $\widetilde{e}'_i$ is part of the path in $R_d^{-1}(D_y^{-i}(H_+))$ that is homotopic to $e'_1$.  The angle between $\widetilde{e}'_i$ and $\widetilde{e}'_0$ (measured counterclockwise) then has measure $2\pi-\frac{2\pi i}{d}$, as desired.  Therefore $H_+$ with angle measure $2\pi - \frac{2\pi i}{d}$ between $e'_1$ and $e_2$ is invariant under $R_d^{-1}D_y^{-i}$ when $1\leq i\leq d-1$. Thus $D_y^iR_d$ is equivalent to $A_{d,d-i}$ for $1\leq i\leq d-1$.
\end{proof}
\begin{figure}
    \centering
    \includegraphics[width=.25\textwidth]{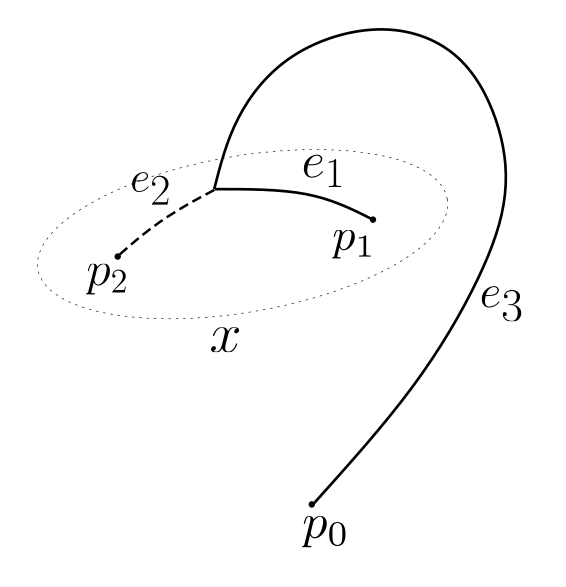}\quad \quad
    \includegraphics[width=.25\textwidth]{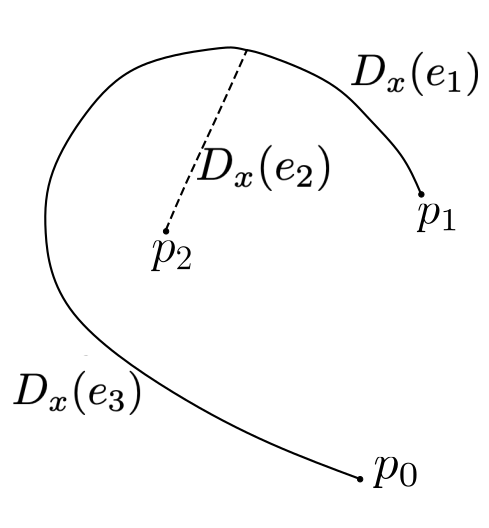}\quad \quad
    \includegraphics[width=.25\textwidth]{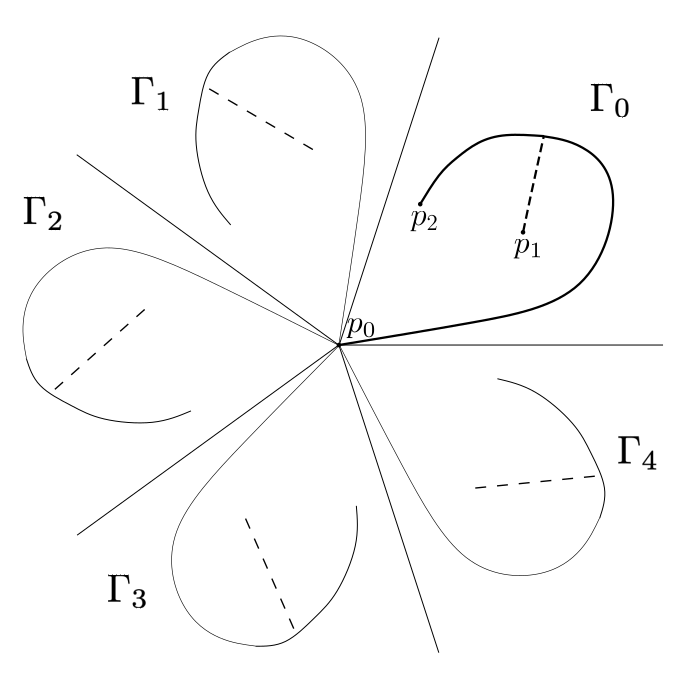}
    \caption{Left: The Hubbard tree $H$ for $D_x^{-1}R_d$, Center: $D_x(H)$, Right: $R_5^{-1}(D_x(H))$.}
    \label{fig:corabbit}
\end{figure}

\begin{lemma}\label{lem:corabbit}
For all $d\geq 2$, the branched cover $D_x^{-1}R_d$ is equivalent to $\overline{R}_d$.
\end{lemma}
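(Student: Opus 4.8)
The plan is to mirror the strategy of Proposition~\ref{prop:base_case}: exhibit a candidate Hubbard tree $H$, verify that it is invariant under lifting by $R_d^{-1}D_x$ together with its angle assignment, and then invoke the conditions of Poirier to conclude that $H$ is the Hubbard tree of $D_x^{-1}R_d$. Since $\overline{R}_d$ is characterized among the degree-$d$ polynomials with a $3$-periodic critical point as the one whose Hubbard tree is a tripod carrying the invariant angle $\frac{2\pi}{3}$ and whose edge map rotates \emph{clockwise}, the candidate $H$ is the tripod shown on the left of Figure~\ref{fig:corabbit}, with an unmarked trivalent central vertex $v$, leaves at $p_0,p_1,p_2$, and angle $\frac{2\pi}{3}$ between consecutive edges. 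I would reuse the branch-cut setup from the proof of Proposition~\ref{prop:base_case}: fix a special branch cut $b$ for $R_d$, so that $R_d^{-1}(b)$ cuts $\C$ into components $\Gamma_0,\dots,\Gamma_{d-1}$ (labeled counterclockwise with $\Gamma_0$ containing $p_1$ and $p_2$), and let $\widetilde p_j\in\Gamma_j$ denote the preimages of $p_0$.

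First I would compute $D_x(H)$ (the middle picture of Figure~\ref{fig:corabbit}) and record the algebraic intersection of each of its three edges with $b$; since $x$ surrounds $p_1$ and $p_2$, the positive twist $D_x$ winds the two edges $[v,p_1]$ and $[v,p_2]$ running into that disk, so each acquires algebraic intersection $\pm 1$ with $b$ while $[v,p_0]$ is unaffected. Next I would take the preimage under $R_d$ of each edge of $D_x(H)$, computing the $d$ path lifts based at the various $\widetilde p_j$ and applying Lemma~\ref{lem:branch_cut_intersection} to pin down which component $\Gamma_{(j\mp 1)\bmod d}$ the far endpoint of each lift lands in. Then I would identify the hull of $\{p_0,p_1,p_2\}$ inside $R_d^{-1}(D_x(H))$ (the right picture of Figure~\ref{fig:corabbit}) and check that it is a tripod homotopic to $H$; the central vertex $v$ is a fixed point of local degree $1$, so it lifts to a single trivalent vertex of $\widetilde H$ rather than being subdivided. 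The angle verification then proceeds exactly as in Proposition~\ref{prop:base_case}: because $v$ has local degree $1$, each $\frac{2\pi}{3}$ angle lifts without being divided, the angles at the unmarked valence-$2$ vertices created by lifting are irrelevant, and the $2\pi$ angle at the leaf $p_1$ contributes $d$ equal angles of $\frac{2\pi}{d}$ at the $\widetilde p_j$; so the only check is that the three surviving edges meet at $v$ in the same cyclic order with the same $\frac{2\pi}{3}$ measures.

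The step I expect to be the main obstacle is determining the \emph{direction} of the induced edge map, since the tree and angle data alone do not separate $\overline{R}_d$ from $R_d$. I must track the cyclic permutation of $[v,p_0],[v,p_1],[v,p_2]$ under lifting and confirm that it is clockwise rather than counterclockwise. This is precisely where the inverse twist $D_x^{-1}$ enters: the edge map of $R_d$ itself is counterclockwise, and twisting by $D_x^{-1}$ reverses the sense of rotation on the tripod, so careful bookkeeping of the orientations at the $\widetilde p_j$ in Lemma~\ref{lem:branch_cut_intersection} is what establishes the clockwise rotation. Once the tripod, the $\frac{2\pi}{3}$ angle assignment, and the clockwise edge map are all confirmed invariant, the conditions of Poirier identify $H$ as the Hubbard tree of $D_x^{-1}R_d$, whence $D_x^{-1}R_d\simeq\overline{R}_d$.
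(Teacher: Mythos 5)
Your proposal is correct and follows essentially the same route as the paper: exhibit the tripod $H$ with the $\frac{2\pi}{3}$ angle assignment, check that the hull of $R_d^{-1}(D_x(H))$ is homotopic to $H$ with invariant angles (invoking Poirier), and then distinguish $\overline{R}_d$ from $R_d$ by tracking the clockwise cyclic permutation of the edges at the trivalent vertex, which the paper does by following a dashed edge in Figure~\ref{fig:corabbit}. The paper's proof is terser and defers the lifting computation to the figure, but the steps and the key point (the direction of rotation) are the same as yours.
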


\begin{proof}
Figure~\ref{fig:corabbit} shows the Hubbard tree $H$ for $D_x^{-1}R_d$ for all $d\geq 2$.  Figure~\ref{fig:corabbit} also shows that the lift of $D_x(H)$ under $R_5$ is homotopic to $H$; a similar calculation verifies that the lift of $D_x(H)$ under $R_d$ is homotopic to $H$ for all $d\geq 2$.  The angle assignment of $2\pi/3$ at each angle of the trivalent vertex is invariant under lifting.  There are no Julia edges, so Poirier's conditions verify that $H$ is indeed the Hubbard tree of $D_x^{-1}R_d$.  Moreover, $D_x^{-1}R_d$ rotates the edges clockwise relative to the trivalent vertex (one edge and its lift is dashed to assist tracking the rotation).  Therefore $D_x^{-1}R_d$ is equivalent to $\overline{R}_d$ for $d\geq 2$.
\end{proof}

\section{Proof of Main Theorem}
We now combine the reduction formulae from Section~\ref{sec:reduction} and the base cases from Section~\ref{sec:base_cases} to prove Theorem~\ref{thm:degree_d}.
\begin{proof}[Proof of Theorem \ref{thm:degree_d}]
  As in the theorem, we consider the map $D_x^mR_d$.  Consider the $d^2$-adic expansion of $m$, which is $m_sm_{s-1}\cdots m_1$ if $m\geq 0$ and $\overline{d^2-1}m_sm_{s-1}\cdots m_1$ if $m<0$.  

We may write $m $ uniquely as 
\begin{align*}
    m=d^2k_1+ d\ell_1 +n_1
\end{align*}
where $k_1\in \Z$, $\ell_1,n_1 \in \{0,1,...,d-1\}$. We note that 
    $$k_1=\frac{m-m_1}{d^2}\text{ and }
    m_1=d\ell_1+n_1.$$
For $2\leq i \leq s$, there exist integers $k_i \in \Z$, $0 \leq \ell_i, n_i \leq d-1$ such that 
\begin{align*}
     k_{i} &= \frac{k_{i-1}-m_i}{d^2},\\
     m_i & = d\ell_i + n_i,\text{ and }\\
     k_{i-1} &= d^2k_i + d\ell_i + n_i.
\end{align*}

That is: the $d^2$-adic expansion of $k_i$ is obtained from $m$ by dropping the last (right-most) $i$ digits.  In particular, if $m\geq 0$, then $k_s=0$.  If $m<0$, then $k_s$ has $d^2$-adic expansion $\overline{d^2-1}$ and therefore $k_s=-1$.
Furthermore, we note that $\ell_i=n_i$ if and only if $(d+1)|m_i$.

By applying Lemma~\ref{lem:reduction} to $D_x^mR_d$ we have
\begin{align*}
    D_x^mR_d \simeq \begin{cases}
    D_x^{k_1}R_d &\text{if } (d+1)|m_1\\
    D_y^{\ell_1-n_1}R_d & \text{otherwise}
    \end{cases}.
\end{align*}
Thus if $(d+1)\nmid m_1$, we may apply  Proposition~\ref{prop:base_case}, to obtain: \begin{align*}
   D_x^mR_d\simeq  D_y^{\ell_1-n_1}R_d & = \begin{cases}
        A_{d,n_1-\ell_1} &\text{if } n_1>\ell_1\\
        A_{d,d-(\ell_1-n_1)} &\text{if } n_1<\ell_1
    \end{cases}.
\end{align*}
So we can deduce the equivalence class of $D_x^mR_d$ directly if $\ell_1 \neq n_1$.  Otherwise, we right-shift the $d^2$-adic expansion of $m$, and consider $D_x^{k_1}R_d$ instead. We repeat this process for $k_1,\cdots,k_s$.  Then one of the following will occur:
\begin{itemize}
    \item If $(d+1)|m_i$ for all $i$, then $\ell_i=n_i$ for all $i$.  In particular $\ell_s=n_s$ and we have
    \begin{align*}
    D_x^mR_d\simeq D_x^{k_1}R_d\simeq D_x^{k_2}R_d\simeq\cdots \simeq D_x^{k_{s-1}}R_d\simeq D_x^{k_s}R_d. 
\end{align*}

If $m\geq 0$, then $k_s=0$, so $D_x^mR_d\simeq D_x^{k_s}R_d\simeq R_d$.

If $m<0$, then $k_s=\overline{d^2-1}=-1$, and $D_x^mR_d\simeq D_x^{k_s}R_d\simeq D_x^{-1}R_d$.  By Lemma~\ref{lem:corabbit}, $D_x^{-1}R_d$ is equivalent to $\overline{R}_d$.
\item If there exists $i$ such that $d+1$ does not divide $m_i$, choose the minimal such $i$ and write $m_i = d\ell_i+n_i$, with $\ell_i, n_i \in \{0,1,...,d-1\}$.  Then $\ell_i \neq n_i$.  Thus by Proposition~\ref{prop:base_case}, we have:
\begin{align*}
    D_x^mR_d \simeq  D_y^{\ell_i-n_i}R_d \simeq \begin{cases}
    A_{d,n_i-\ell_i}&\text{if } n_i>\ell_i\\
    A_{d,d-(\ell_i-n_i)} &\text{if } n_i<\ell_i
    \end{cases}.
\end{align*}
\end{itemize}
This completes the proof of the theorem.
\end{proof}

\bibliographystyle{plain}
\bibliography{refs}
\end{document}